\newtheoremstyle{myplain}
  {3pt}{3pt}
  {\normalfont}
  {}
  {\bfseries}
  {.}
  {0.5em}
  {}
\numberwithin{equation}{section}
\theoremstyle{myplain}
\newtheorem{theorem}{Theorem}[section]
\newtheorem{lemma}[theorem]{Lemma}
\newtheorem{claim}[theorem]{Claim}
\newtheorem{case}{Case}
\newtheorem{problem}[theorem]{Problem}
\newtheorem{corollary}[theorem]{Corollary}
\newtheorem{definition}[theorem]{Definition}
\newtheorem{remark}[theorem]{Remark}
\begin{document}

\title{Tight bounds for judicious $3$-partitions of graphs}
\author{Peiru Kuang \footnote{School of Mathematical Sciences, Shanghai Jiao Tong University, Shanghai 200240, China. Email: peiru\_k@sjtu.edu.cn} 
\and 
Yan Wang \footnote{School of Mathematical Sciences, Shanghai Jiao Tong University, Shanghai 200240, China. Supported by National Key R\&D Program of China under Grant No. 2022YFA1006400, National Natural Science Foundation of China (Nos. 12571376) and Shanghai Municipal Education Commission (No. 2024AIYB003). Email: yan.w@sjtu.edu.cn (corresponding author).}}

\date{}

\maketitle
\begin{abstract}
In this paper, we show that every graph with $m$ edges admits a 3-partition such that 
\[
\max_{1 \leq i \leq 3} e(V_i) \leq \frac{m}{9} + \frac{1}{9}h(m)
\quad \text{and} \quad
e(V_1, V_2, V_3) \geq \frac{2}{3}m + \frac{1}{3}h(m),
\]
where $h(m) = \sqrt{2m + 1/4} - 1/2$. This answers a problem of Bollobás and Scott affirmatively. We also solve several related problems of Bollobás and Scott. All of our results are tight.
\end{abstract}

\section{Introduction}\label{section-introduction}
Let $G$ be a simple graph. Let $k\geq 2$ be an integer. A \textit{$k$-partition} of $G$, denoted by $(V_1,V_2,\ldots,V_k)$, is a partition of $V(G)$ into $k$ pairwise disjoint non-empty sets $V_i$, $i\in [k]$. For any $k$-partition $(V_1, V_2, \ldots, V_k)$, we define the number of \textit{crossing edges} (edges with end vertices in distinct parts) as $e_G(V_1, V_2, \ldots, V_k) = \sum_{1 \leq i < j \leq k} e(V_i, V_j)$. When clear from the text, the subscript $G$ may be omitted. 

Partition problems in combinatorics and computer science typically seek a partition of a combinatorial structure that optimizes certain parameters. A classical example is the \emph{Max-Cut problem}, which asks for a bipartition (i.e. a $2$-partition) maximizing the number of crossing edges; see~\cite{FMM2002, GM1995, K2009}. Another interesting type of problems seek a partition that optimizes multiple quantities simultaneously. These are known as \textit{judicious partition problems}, see \cite{BS2002, BS2002problems, BS2004, scott2005} for more references.

\subsection{Large cuts}
For a graph $G$ and an integer $k \geq 2$, let $f_k(G)$ denote the maximum number of edges in a $k$-partite subgraph of $G$. Define $f_k(m):=\min \{f_k(G):e(G)=m\}$. 

The classical \textit{Max-Cut problem} seeks to compute $f_2(G)$ and is NP-complete. It is easy to see that $f_2(G)\geq m/2$, by considering the expected number of crossing edges in a random partition. Considerable effort has been devoted to establishing lower bounds of $f_2(G)$. One of the most notable results, by Edwards~\cite{Edwards1973, Edwards1975}, states that
\begin{equation} \label{max-2-cut}
 f_2(G) \geq \frac{m}{2} + \frac{1}{4}h(m),   
\end{equation}
where $h(m)=\sqrt{2m+1/4}-1/2$. This bound is sharp, as $K_{2r+1}$ are extremal graphs for every positive integer $r$.
In \cite{BS2002}, Bollob{\'a}s and Scott extended \eqref{max-2-cut} and showed that 
\begin{equation} \label{max-k-cut}
 f_k(G)\ge \frac{k-1}{k}m+\frac{k-1}{2k}h(m)-\frac{(k-2)^2}{8k}.   
\end{equation}
Note that \eqref{max-k-cut} is best possible for complete graphs when $k$ is even \footnote{We note that the constant term $+\frac{k^2 - 2k + 2}{8k}$ appearing in \cite{BS2002} is incorrect and it should be $-\frac{k^2 - 2k + 2}{8k}$.}. In this paper, we establish the following tight result for \(k = 3\).
\begin{theorem} \label{maxcut k=3}
Let $G$ be a graph with $m$ edges. Then
  \begin{equation} \label{m3c}
        f_3(G)\ge \frac{2}{3}m+\frac{1}{3}h(m),
  \end{equation}
and equality holds if and only if $G=K_n$ where $n$ is not divisible by $3$. 
\end{theorem}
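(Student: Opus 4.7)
The plan is to fix a partition $(V_1, V_2, V_3)$ of $V(G)$ that maximizes $e(V_1, V_2, V_3)$ and extract the bound via an Edwards-style switching analysis. First, local maximality immediately yields, for every $v \in V_i$ and every $j \neq i$, the inequality $d_{V_i}(v) \leq d_{V_j}(v)$, since otherwise moving $v$ from $V_i$ to $V_j$ strictly increases the cut. Summing over $v \in V_i$ gives $2 e(V_i) \leq e(V_i, V_j)$ for each $i \neq j$, and summing over all ordered pairs $(i,j)$ recovers the weak bound $e(V_1, V_2, V_3) \geq \tfrac{2m}{3}$ that constitutes the leading term of \eqref{m3c}.

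To upgrade this to \eqref{m3c}, I would introduce the nonnegative switching slack
\[
 \sigma_v \;=\; \sum_{j \neq i(v)}\bigl(d_{V_j}(v) - d_{V_{i(v)}}(v)\bigr) \;=\; d(v) - 3\, d_{V_{i(v)}}(v)
\]
for each vertex $v$ lying in part $V_{i(v)}$, and observe that $\sum_v \sigma_v = 6\bigl(e(V_1,V_2,V_3) - \tfrac{2m}{3}\bigr)$. Imitating Edwards' proof of \eqref{max-2-cut}, I would combine three ingredients: (i) pointwise lower bounds on $\sigma_v$ obtained from local maximality together with divisibility/parity constraints among $d_{V_1}(v), d_{V_2}(v), d_{V_3}(v)$; (ii) a convexity (Cauchy--Schwarz) bound relating $\sum_v \sigma_v$ and $\sum_v \sigma_v^2$ through the number of vertices $n$; and (iii) the handshake identity $\sum_v d(v) = 2m$. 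These should produce a quadratic inequality in $T := e(V_1, V_2, V_3) - \tfrac{2m}{3}$ of the shape $T(T + \tfrac{1}{2}) \geq \tfrac{m}{9}$, whose positive root is exactly $\tfrac{1}{3} h(m)$. A sanity check on $K_{3r+1}$ with parts $(r, r, r+1)$, where $\sigma_v = 3$ for $v$ in a size-$r$ part and $\sigma_v = 0$ for $v$ in the size-$(r+1)$ part, confirms that such an inequality must be tight on the extremal graphs.

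For the equality characterization, assume $f_3(G) = \tfrac{2m}{3} + \tfrac{1}{3} h(m)$. Tracing back, every inequality $d_{V_i}(v) \leq d_{V_j}(v)$ and every $2 e(V_i) \leq e(V_i, V_j)$ must hold with the minimum allowed surplus, and the Cauchy--Schwarz step must hold with equality. These rigidities force $G$ to be regular and, unpacking the tight local conditions, push every non-edge inside a single part, so $G$ is a disjoint union of cliques; combining with the size constraints extracted above collapses $G$ to a single clique $K_n$. Finally, the arithmetic of $h$ (an integer iff $m = \binom{n}{2}$) together with a direct computation rules out $3 \mid n$: for $n = 3r$ the balanced partition of $K_n$ yields $3r^2 > 3r^2 - \tfrac{1}{3} = \tfrac{2m}{3} + \tfrac{1}{3} h(m)$, so $K_{3r}$ is not extremal.

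The step I expect to be the main obstacle is removing the constant $-\tfrac{1}{24}$ of \eqref{max-k-cut}, which stems from a half-integer rounding loss in the Bollobás--Scott averaging. Bridging this gap will most likely require either a two-vertex swap refinement — simultaneously moving a pair of vertices between parts when the single-vertex switches are all tight, to kill the configurations that realize the rounding loss — or a careful parity argument on the residues of the $d_{V_j}(v)$ modulo $3$, analogous in spirit to (but trickier than) Edwards' modulo-$2$ trick for bipartitions.
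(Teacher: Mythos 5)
There is a genuine gap: the entire quantitative content of the theorem --- the $+\tfrac{1}{3}h(m)$ term --- is left unproved. Your local-switching setup correctly delivers $e(V_1,V_2,V_3)\ge \tfrac{2m}{3}$, but steps (i)--(iii) that are supposed to upgrade this are only named, not executed, and you yourself flag the decisive difficulty (eliminating the $-\tfrac{(k-2)^2}{8k}$-type rounding loss of \eqref{max-k-cut}) as an unresolved ``main obstacle,'' offering two speculative remedies without carrying out either. This is not a peripheral detail: the known Edwards-style analyses for $k\ge 3$ incur exactly that loss, so the burden of the theorem is precisely the step you postpone. A smaller but telling error: your target quadratic $T(T+\tfrac12)\ge \tfrac{m}{9}$ does not have $\tfrac13 h(m)$ as its positive root; since $T=\tfrac13 h(m)$ satisfies $(3T+\tfrac12)^2=2m+\tfrac14$, the correct identity is $T(T+\tfrac13)=\tfrac{2m}{9}$ (check on $K_4$: $m=6$, $h(m)=3$, $T=1$, and $1\cdot\tfrac43=\tfrac{12}{9}$, whereas your version gives $1\cdot\tfrac32\ne\tfrac{6}{9}$). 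The equality characterization likewise rests on rigidity in inequalities you have not established.

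For comparison, the paper does not use local switching at all for the lower bound. It identifies nonadjacent vertices to turn $G$ into a weighted complete graph $H$ on $n_1$ vertices with total weight $m$ and $m_1=\binom{n_1}{2}\le m$, computes $f_k(K_{n_1})$ exactly for balanced partitions (Lemma \ref{Kn-cut}), and then averages over random balanced partitions of $H$, using that $q(m)/m$ is decreasing to pass from $m_1$ to $m$. The reason $k=3$ admits the clean bound with no deficiency is that the correction $\tfrac{k-1}{2k}-\tfrac{s(k-s)}{2k}$ in Lemma \ref{Kn-cut} vanishes for both residues $s\in\{1,2\}$ when $k=3$; this arithmetic coincidence is exactly what a purely local, one-vertex switching analysis cannot see, which is why your sketch stalls where it does. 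If you want to salvage your approach, you would need to inject the global balancedness information somehow (your proposed two-vertex swaps are a plausible but unproven substitute).
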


\subsection{Judicious partitions}
Bollob{\'a}s and Scott initiated the study of \textit{judicious partition problems}. The famous \textit{bottleneck bipartition problem} by Entringer~\cite{SS1994} seeks a bipartition that minimizes $\max\{e(V_1),e(V_2)\}$. In \cite{BS1999}, Bollob{\'a}s and Scott proved that every graph admits a $k$-partition in which each part spans relatively few edges, i.e.,
\begin{equation} \label{min-eVi}
\max_{1\leq i \leq k} e(V_i) \leq \frac{m}{k^2} + \frac{k-1}{2k^2} h(m),
\end{equation}
which is tight for \( K_{kr+1} \) for every positive integer \( r \). In fact, in the same paper~\cite{BS1999}, they established a stronger result, demonstrating that every graph admits a $2$-partition that satisfies both inequalities \eqref{max-2-cut} and \eqref{min-eVi} with $k=2$,
and the complete graphs $K_{2r+1}$ are the only extremal graphs (modulo isolated vertices). Subsequently, Bollob\'{a}s and Scott~\cite{BS2002} also asked the following more general problem.
\begin{problem}[Bollob\'{a}s and Scott~\cite{BS2002}] \label{problem}
Does any graph $G$ of size $m$ have a $k$-partition satisfying both 
\[
\max\limits_{1\le i \le k}\{e({{V}_{i}})\} \le \frac{m}{k^2}+\frac{k-1}{2k^2}h(m)
\quad \text{and} \quad
e({{V}_{1}},\ldots,{{V}_{k}})\ge \frac{k-1}{k}m+\frac{k-1}{2k}h(m)-\frac{(k-2)^2}{8k}?
\]
\end{problem}

A weaker version of Problem \ref{problem} was obtained by \cite{XY2009}, which showed that \( G \) admits a $k$-partition $\left( V_1, \ldots, V_k \right)$ such that
$\max_{1\le i \le k}\{e({{V}_{i}})\} \leq m/k^2 + (k-1)h(m)/(2k^2)$ and
%\quad \text{and} \quad
$e(V_1, \ldots, V_k) \geq (k-1)m/k+ h(m)/(2k)$.
For further improvements, we refer the reader to \cite{FHZ2014, GNYZ2025, LX2016, XY2011}.
 
The main result of this paper is the following theorem which gives an affirmative answer to Problem \ref{problem} when $k=3$.
\begin{theorem} \label{main result}
Every graph $G$ with $m$ edges admits a $3$-partition $({{V}_{1}},V_2,{{V}_{3}})$ such that
\begin{enumerate}
    \item [(i)] $\max \limits_{1\le i \le 3}\{e({{V}_{i}})\} \le \frac{m}{9}+\frac{1}{9}h(m)$, and
    \item [(ii)] $e({{V}_{1}},{{V}_{2}},{{V}_{3}})\ge \frac{2}{3}m+\frac{1}{3}h(m)$.
\end{enumerate}
\end{theorem}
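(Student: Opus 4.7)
The plan is to pick a 3-partition $(V_1,V_2,V_3)$ that is lexicographically optimal and derive both bounds from the associated local optimality conditions. Concretely, I would choose $(V_1,V_2,V_3)$ first to maximize the crossing edge count $e(V_1,V_2,V_3)$; subject to this, to minimize $\max_i e(V_i)$; and as a final tie-breaker, to minimize $\sum_i e(V_i)^2$. Condition (ii) then follows immediately from Theorem \ref{maxcut k=3}, which also gives the estimate $e(V_1)+e(V_2)+e(V_3)\le m/3-h(m)/3$. Relabelling so that $e(V_1)\ge e(V_2)\ge e(V_3)$, the task reduces to showing $e(V_1)\le m/9+h(m)/9$, for which it suffices (in view of the preceding estimate) to prove the ``spread'' bound $s:=2e(V_1)-e(V_2)-e(V_3)\le 2h(m)/3$.

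The max-cut property forces $d_{V_i}(v)\le d_{V_j}(v)$ for every $v\in V_i$ and $j\ne i$, since otherwise the single-vertex move $v:V_i\to V_j$ would strictly increase the cut. Summing this yields $e(V_i,V\setminus V_i)\ge 4e(V_i)$ for each $i$. The two tie-breakers add quantitative refinements for ``flat'' vertices: if $v\in V_1$ satisfies $d_{V_1}(v)=d_{V_j}(v)$, then moving $v$ to $V_j$ preserves the cut, so optimality of $\max_i e(V_i)$ and then of $\sum_i e(V_i)^2$ force $d_{V_1}(v)\ge e(V_1)-e(V_j)$. An analogous analysis of 2-swaps between $V_1$ and $V_j$ yields further inequalities relating joint neighbourhoods of swap pairs to the excesses $e(V_1)-e(V_j)$.

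Assuming for contradiction that $s>2h(m)/3$, I would combine these local constraints with a double-counting argument analogous to the one underlying Theorem \ref{maxcut k=3}. Heuristically, $s>2h(m)/3$ means $V_1$ is noticeably denser than $V_2\cup V_3$, so many vertices of $V_1$ have slack in the pointwise inequality $d_{V_1}(v)\le d(v)/3$; each such vertex should, after a suitable cut-preserving vertex move or 2-swap, decrease $\max_i e(V_i)$ or $\sum_i e(V_i)^2$ unless it is ``pinned'' by the quantitative refinements above. Formalising this forcing should yield a quadratic inequality in $s$ which, together with the integrality $s\in\mathbb{Z}$, produces $s\le 2h(m)/3$, exactly the bound realised by $K_{3r+1}$ and contradicting our assumption.

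The hardest step, I anticipate, will be extracting this discrete Edwards-type quadratic bound on $s$: the $h(m)/9$ slack in (i) reflects an integrality constraint rather than a linear average, and the argument must mirror the sharpness mechanism of $K_{3r+1}$. A secondary difficulty is dealing with configurations in which no $v\in V_1$ has $d_{V_1}(v)=d_{V_j}(v)$ (so the single-vertex-move refinement is vacuous), where one must instead push information through 2-swaps or through summing the strict pointwise inequalities against weight functions on the degree sequence, and then verify that equality throughout pins $G$ down to $K_{3r+1}$ modulo isolated vertices.
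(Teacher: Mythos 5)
The reduction at the start is fine: for a maximum-cut partition, (ii) follows from Theorem~\ref{maxcut k=3}, one gets $\sum_i e(V_i)\le m/3-h(m)/3$, and (i) is then equivalent to the spread bound $s=2e(V_1)-e(V_2)-e(V_3)\le 2h(m)/3$. The local conditions you list (property $Q$, i.e.\ $e(V_i,\overline{V_i})\ge 4e(V_i)$, and $d_{V_1}(v)\ge e(V_1)-e(V_j)$ for cut-preserving vertices) are also correct. But the entire content of the theorem is concentrated in the step you leave as a heuristic: ``formalising this forcing should yield a quadratic inequality in $s$.'' No such inequality is derived, and there is no indication of how the local constraints aggregate to give exactly the Edwards-type bound $2h(m)/3$; as written this is a plan, not a proof. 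The paper's proof shows why this step is genuinely hard: one cannot get (i) by local moves alone. Instead it removes a minimum-degree vertex $v_0$ from $V_1$ and \emph{completely re-partitions} $\overline{V_1\setminus\{v_0\}}$ using a new stability theorem for judicious $2$-partitions (Theorem~\ref{stability}), whose extra additive $+1/4$ in the cut is exactly what compensates for the cut lost by the removal; the verification then rests on a delicate concavity analysis (Lemmas~\ref{l4} and~\ref{l5}, requiring $m\ge 18$, with $m<18$ handled by Theorem~\ref{small edges}) and a separate treatment of the case where the complement is an odd complete graph. None of this machinery, nor any rigorous substitute for it, appears in your proposal.

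A second, structural concern: by insisting that your partition maximize the cut and \emph{then} satisfy (i), you are implicitly claiming a statement stronger than Theorem~\ref{main result} --- namely that some maximum-cut $3$-partition already satisfies (i). The paper deliberately avoids this: its final partition $(W_1,W_2,W_3)$ is only guaranteed to have cut at least $2m/3+h(m)/3$, not to be a maximum cut (it trades cut size against $e(W_1)$). You would need either to prove your stronger claim or to relax the first tie-breaker, and in the latter case (ii) no longer comes for free from Theorem~\ref{maxcut k=3}. Either way, the key quantitative bound on $s$ remains unproved, so there is a genuine gap.
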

\begin{remark}
Theorem \ref{main result} is tight since $K_{3r+1}\cup p K_1$ are extremal graphs for any positive integers $r$ and $p$.
\end{remark}
A crucial ingredient in the proof of Theorem \ref{main result} is the following stability result for judicious 2-partitions: Either $G= K_n$ or one can obtain a stronger judicious partition result than \cite{BS1999}.
\begin{theorem} \label{stability}
For any graph $G$ with $m$ edges, we have either
\begin{itemize}
    \item[(i)] $G$ is a complete graph of odd order, or
    \item[(ii)] $f_2(G)\geq m/2+h(m)/4+1/4$, and there exists a 2-partition \( V(G) = V_1 \cup V_2 \) such that 
     \[\max\limits_{1\le i \le 2}e(V_i) \leq \frac{m}{4} + \frac{1}{8}h(m)\quad \text{and} \quad
    e(V_1, V_2) \geq \frac{m}{2} + \frac{1}{4}h(m)+\frac{1}{4}.\]
\end{itemize}
\end{theorem}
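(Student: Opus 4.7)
The plan is to prove Theorem \ref{stability} in two stages. First I would establish the stability version of Edwards' cut bound---namely $f_2(G) \ge m/2 + h(m)/4 + 1/4$ whenever $G$ is not an odd complete graph (up to isolated vertices). Then I would extract a 2-partition achieving this improved cut bound together with the Bollob\'as--Scott max-part bound $\max_i e(V_i) \le m/4 + h(m)/8$.

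For the cut bound I would split by the component structure of $G$. If $G$ has two or more edge-containing components, I would apply Edwards' bound componentwise. When every non-trivial component is $K_{2r_j+1}$ for some $r_j \ge 1$, a direct computation using $f_2(K_{2r+1}) = r(r+1)$ gives $f_2(G) = m/2 + (\sum r_j)/2$, and the inequality $h(m) \le 2(\sum r_j) - 1$ (from concavity of $h$) yields the desired $+1/4$ slack. Otherwise at least one component is not an odd complete graph and contributes the $+1/4$ by induction, propagating via $\sum h(m_i) \ge h(m)$. If $G$ has a single non-trivial component equal to $K_n$, either $n$ is odd (case (i)) or $n = 2r$ is even and $f_2(K_{2r}) = r^2 = m/2 + h(m)/4 + 1/4$ directly. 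The main remaining case is when $G$ is connected and non-complete; here I would use a DFS-based greedy argument: a DFS ordering with greedy placement yields a cut of at least $m/2 + t/2$, where $t$ counts indices with odd back-degree $d_i^- = |N(v_i) \cap \{v_1,\ldots,v_{i-1}\}|$. By exploiting a missing edge---e.g.\ placing its endpoints as the last two vertices in the DFS order---one can force an additional odd back-degree beyond the extremal $K_n$ count, giving $t \ge h(m)/2 + 1/2$ and hence the $+1/4$ improvement.

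For the joint partition I would take $(V_1, V_2)$ maximizing $e(V_1, V_2)$ subject to which I minimize $\max(e(V_1), e(V_2))$. The cut bound of the first stage gives $e(V_1, V_2) \ge m/2 + h(m)/4 + 1/4$, and max-cut maximality gives $2 e(V_i) \le e(V_1, V_2)$ for each $i$. If $\max e(V_i) > m/4 + h(m)/8$, say $e(V_1) > m/4 + h(m)/8$, then combining with $e(V_1) + e(V_2) = m - e(V_1, V_2) \le m/2 - h(m)/4 - 1/4$ forces $e(V_1) - e(V_2) > h(m)/2 + 1/4 > 0$. A parity/averaging argument on the degree sequence in $V_1$ then produces a vertex $v \in V_1$ with $d_{V_1}(v) = d_{V_2}(v) \ge 1$; its reassignment to $V_2$ preserves $e(V_1, V_2)$ while strictly reducing $\max e(V_i)$, contradicting the minimality of the partition.

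The main obstacle is the connected non-complete case of the cut bound: for graphs close to $K_{2r+1}$ (e.g.\ $K_{2r+1}$ minus a few edges), $(n-1) - h(m)$ is strictly less than $1$, so Edwards' connected bound $f_2(G) \ge m/2 + (n-1)/4$ alone does not deliver the $+1/4$ slack. A refined choice of DFS order together with a careful count of induced odd back-degrees is required. By contrast, the joint-partition phase is comparatively routine once the cut bound carries the necessary slack.
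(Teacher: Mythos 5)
There are genuine gaps in both stages of your plan, and in each case the gap sits exactly where the real work of the theorem lies.

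For the cut bound, you yourself flag the connected non-complete case as unresolved, and it is: for $G$ close to $K_{2r+1}$ the quantity $(n-1)-h(m)$ is below $1$, and it is not shown that a DFS order can be chosen so that the count $t$ of odd back-degrees exceeds the extremal value by the needed amount; ``a refined choice of DFS order \dots is required'' is the statement of the problem, not a proof. The paper avoids DFS entirely (Lemma \ref{k-cut}(ii)): it identifies non-adjacent vertices to obtain a weighted complete graph $H$ on $n_1$ vertices, peels off one unit layer $K_{n_1}$ leaving a residual weighted graph $W$ with $w(W)=m-\binom{n_1}{2}$, and combines $f_2(K_{n_1})$ with the inductive bound on $W$ via convexity of $h$, i.e.\ $h(m_1)+h(m_2)\ge h(m_1+m_2)$, with the $+1/4$ coming either from the even-order layer or from the induction on $W$.

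For the joint partition, your argument does not close. First, the averaging claim fails: in a max cut one only has $d_{V_2}(v)\ge d_{V_1}(v)$ for $v\in V_1$, and the total slack $\sum_{v\in V_1}\bigl(d_{V_2}(v)-d_{V_1}(v)\bigr)=e(V_1,V_2)-2e(V_1)=m-3e(V_1)-e(V_2)$ can be of order $m$ even when $e(V_1)>m/4+h(m)/8$, so nothing forces a vertex with $d_{V_1}(v)=d_{V_2}(v)\ge 1$. Second, even granting such a $v$ with common degree $d$, reassigning it yields parts of sizes $e(V_1)-d$ and $e(V_2)+d$, so the maximum strictly decreases only if $d<e(V_1)-e(V_2)$, which you do not control. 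This is precisely the hard step: the paper's Theorem \ref{sta1} takes a \emph{good} partition of minimum lexicographic order, moves the vertex $v_0$ of \emph{minimum} nonzero degree $d_0$ in $G[V_1]$ (bounding $d_0$ via $e(V_1)\ge\binom{d_0+1}{2}$), and then needs a further case analysis on whether $\alpha\ge(h(m)+1)/8$, whether some vertex of $W_1$ has strictly more cross neighbours, and ultimately on the second-minimum degree $d_1$, before reaching a contradiction. Without an argument of comparable strength your Stage 2 does not establish the bound $\max_i e(V_i)\le m/4+h(m)/8$.
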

In addition, for every $k\ge3$, we solve Problem \ref{problem} for graphs with not many edges.
\begin{theorem} \label{small edges} Let $k\geq 3$ be an integer. Every graph $G$ with $m<2k^2$ edges admits a $k$-partition $({{V}_{1}},\ldots,{{V}_{k}})$ such that
\begin{enumerate}
    \item [(i)] $\max\limits_{1\le i \le k}\{e({{V}_{i}})\} \le \frac{m}{k^2}+\frac{k-1}{2k^2}h(m)$, and
    \item [(ii)] $e({{V}_{1}},\ldots,{{V}_{k}})\ge \frac{k-1}{k}m+\frac{k-1}{2k}h(m)-\frac{(k-2)^2}{8k}$.
\end{enumerate}
\end{theorem}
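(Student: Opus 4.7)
The plan is to combine the max-$k$-cut lower bound \eqref{max-k-cut} with a balancing argument tailored to the regime $m<2k^2$. The key observation is that in this range,
\[
T_1:=\frac{m}{k^2}+\frac{k-1}{2k^2}h(m)<\frac{2k^2}{k^2}+\frac{(k-1)\cdot 2k}{2k^2}=2+\frac{k-1}{k}<3,
\]
so the integrality of $e(V_i)$ collapses condition (i) to the discrete requirement $\max_i e(V_i)\le 2$ in the bulk of the range (and to $\le 1$ or $\le 0$ in narrow subranges of $m$).

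Starting from a $k$-partition that attains the bound in \eqref{max-k-cut}, so that condition (ii) holds, I would select, among all such partitions, one that minimizes $\max_i e(V_i)$, with further tie-breaking by minimizing $\sum_i e(V_i)^2$. Suppose for contradiction that this minimum exceeds $\lfloor T_1\rfloor$, so that some part, say $V_1$, induces at least $3$ edges. Writing $T_2:=\frac{k-1}{k}m+\frac{k-1}{2k}h(m)-\frac{(k-2)^2}{8k}$, the sum $\sum_i e(V_i)=m-e(V_1,\ldots,V_k)\le m-T_2$ is bounded by an explicit small expression in $m$ and $k$; a pigeonhole argument on the remaining $k-1$ parts then produces a part $V_j$ whose $e(V_j)$ is strictly below the threshold.

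The proof would then be completed by a single vertex move: find $v\in V_1$ with $d_{V_1}(v)\ge 1$ (which exists since $e(V_1)\ge 3$) and transfer $v$ to such a light part $V_j$. Provided the target is chosen so that the cut change $d_{V_1}(v)-d_{V_j}(v)$ is non-negative --- which is automatic at a cut-extremal choice of $V_j$ --- both (ii) and (i) are simultaneously preserved while $e(V_1)$ strictly decreases, contradicting the extremal choice of partition.

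The main obstacle will be the degenerate configurations where every apparent swap either pushes some $V_j$ over the threshold or strictly drops the cut below $T_2$. This forces attention to pathological local structures, for instance $G[V_1]$ being a matching with every vertex having equal degree into each other part, or several parts simultaneously saturated at the threshold. These cases should be tractable using the bound $m<2k^2$, which limits the number of non-isolated vertices to $2m<4k^2$ and thereby reduces the residual work to a finite case analysis; the narrower subranges of $m$ where $\lfloor T_1\rfloor\in\{0,1\}$ should be handled by exactly the same swap framework with the smaller threshold.
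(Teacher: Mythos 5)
Your opening observation is the same as the paper's: since $m<2k^2$, the threshold $\frac{m}{k^2}+\frac{k-1}{2k^2}h(m)$ is below $3$, so (i) reduces to the discrete demand $\max_i e(V_i)\le 2$ (or $\le 1$, $\le 0$ in the lower subranges), and the paper likewise splits into the ranges $m<k^2/2+k/2$, $k^2/2+k/2\le m<3k^2/2$, $3k^2/2\le m<2k^2$. But the engine you propose for the reduction --- a single-vertex transfer from a heavy part to a light part found by pigeonhole --- has a genuine gap at exactly the point you flag as ``automatic.'' If the partition attains the cut bound \eqref{max-k-cut} (in particular if it maximizes the cut), then for every $v\in V_1$ and every $j$ one has $d_{V_j}(v)\ge d_{V_1}(v)$, so \emph{every} single-vertex move weakly decreases the cut; the move is cut-preserving only if some part $V_j$ satisfies $d_{V_j}(v)=d_{V_1}(v)$ exactly, and such a $j$ need not exist, let alone coincide with a light part. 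Conversely, the light part produced by pigeonhole may receive $d_{V_j}(v)\ge 2$ new internal edges and jump over the threshold. So the two requirements (preserve (ii), preserve (i)) pull in opposite directions, and your tie-breaking by $\sum_i e(V_i)^2$ does not resolve this. Your fallback --- that the residual configurations form ``a finite case analysis'' because $2m<4k^2$ --- is not finite: $k$ is an arbitrary parameter, so this is an unbounded family of configurations.

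The paper's resolution of precisely this tension is the actual content of the proof and is missing from your sketch. It classifies parts by their internal edge count ($e(V_i)\ge 3$, $=2$, $=1$, $=0$), proves via carefully designed \emph{multi-vertex} exchanges (Claims \ref{c1}, \ref{c2}, \ref{c7}, \ref{c41}, distinguishing whether a $2$-edge part is $2K_2$ or $P_3$, etc.) that any two parts of given types must be joined by at least $5$, $6$, $4$, or $3$ crossing edges, and then sums these bounds into a quadratic form in the counts $w,x,y,z$ of parts of each type that exceeds $m$, forcing the heavy types to be absent. Moreover, in the top range $3k^2/2\le m<2k^2$ the paper does \emph{not} keep (ii) via \eqref{max-k-cut} at all: it works with a minimum-lexicographic-order partition (which need not attain \eqref{max-k-cut}) and verifies (ii) directly from $e(V_1,\ldots,V_k)=m-(2x+y)$ together with an upper bound on $2x+y$ extracted from the same quadratic form. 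Without an argument of this kind --- or some other mechanism that simultaneously controls the cut and the internal loads during the exchanges --- your proof does not go through.
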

In fact, Theorem~\ref{main result} is closely related to another problem of Bollob{\'a}s and Scott~\cite{BS2002problems}.
\begin{problem}[Bollob\'{a}s and Scott~\cite{BS2002problems}] \label{related}
For \( k \geq 3 \), what is the largest constant \( c(k) \) such that for every graph \( G \) with \( m \) edges, there exists a partition \( V(G) = \bigcup_{i=1}^k V_i \) satisfying for all \( 1 \leq i \leq k \)
\[
\binom{k+1}{2} e(V_i) + c(k) \sum_{\substack{j=1 \\ j \neq i}}^k e(V_j) \leq m?
\]
\end{problem}
They~\cite{BS2002problems} note that $c(k) = k/2 $ (if true) would be best possible. We show it is true when $k=3$.
\begin{corollary}
Every graph \( G \) with \( m \) edges admits a $3$-partition \( V(G) = \bigcup_{i=1}^3 V_i \) satisfying
\[
\binom{4}{2} e(V_i) + \frac{3}{2} \sum_{\substack{j=1 \\ j \neq i}}^3 e(V_j) \leq m
\]
for all \( 1 \leq i \leq 3 \).
\end{corollary}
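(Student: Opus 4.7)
The plan is to deduce the corollary directly from Theorem \ref{main result}; the two simultaneous bounds supplied by that theorem are numerically tuned so that, when combined, they yield exactly the stated inequality with constant $c(3) = 3/2$.

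First, I would rewrite the target inequality in a more transparent form. Using the identity $e(V_1)+e(V_2)+e(V_3)+e(V_1,V_2,V_3) = m$, one checks that
\[
6\,e(V_i) + \frac{3}{2}\sum_{j\neq i} e(V_j) = \frac{9}{2}\,e(V_i) + \frac{3}{2}\bigl(m - e(V_1,V_2,V_3)\bigr),
\]
so the desired inequality is equivalent to
\[
9\,e(V_i) \;\leq\; 3\,e(V_1,V_2,V_3) - m \qquad \text{for every } i \in \{1,2,3\}.
\]

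Second, I would invoke Theorem \ref{main result} to produce a 3-partition $(V_1,V_2,V_3)$ of $G$ satisfying $\max_{1\le i \le 3} e(V_i) \le m/9 + h(m)/9$ and $e(V_1,V_2,V_3) \ge 2m/3 + h(m)/3$. Plugging these into the two sides of the reduced inequality gives, for every index $i$,
\[
9\,e(V_i) \leq m + h(m) \quad\text{and}\quad 3\,e(V_1,V_2,V_3) - m \geq m + h(m),
\]
and chaining these two estimates yields the required bound.

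There is no real obstacle here: the proof is essentially a one-line verification once Theorem \ref{main result} is in hand. The same calculation also explains why the value $c(3) = k/2 = 3/2$ conjectured by Bollob\'as and Scott is exactly the correct constant, since at the extremal graphs $K_{3r+1}$ the intermediate quantity $m + h(m)$ is attained on both sides of the reduced inequality, showing that both the value of $c(3)$ and the family of extremal graphs are inherited from Theorem \ref{main result}.
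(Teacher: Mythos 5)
Your proposal is correct and follows essentially the same route as the paper: invoke Theorem \ref{main result} to get a $3$-partition satisfying both bounds, then substitute into the target inequality via the identity $\sum_j e(V_j) = m - e(V_1,V_2,V_3)$; your rearrangement to $9\,e(V_i) \le 3\,e(V_1,V_2,V_3) - m$ with the common intermediate quantity $m + h(m)$ is just a cosmetic reorganization of the paper's one-line computation.
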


\begin{proof}
By Theorem \ref{main result}, let \( (V_1, V_2, V_3 )\) be a $3$-partition of \( V(G) \) such that
\[
e(V_1,V_2, V_3) \geq \frac{2}{3} m + \frac{1}{3}h(m)
\quad
\text{and}
\quad
\max\limits_{1\leq i \leq 3}e(V_i) \leq \frac{m}{9} + \frac{1}{9}h(m).\]
Then
\[
\begin{aligned}
\binom{4}{2} e(V_i) + \frac{3}{2} \sum_{j \neq i} e(V_j) 
&= \frac{3}{2} \left( 3e(V_i) + \sum_{j=1}^{3} e(V_j) \right) 
= \frac{3}{2} \left( 3e(V_i) + m - e(V_1, V_2, V_3) \right) \\
&\leq \frac{9}{2}\left(\frac{m}{9}+\frac{1}{9}h(m)\right)+\frac{3}{2}\left(m-\frac{2}{3}m-\frac{1}{3}h(m)\right) = m.
\end{aligned}
\]
The complete graphs $K_{3r+1}$ are extremal graphs and thus $c(3)=3/2$ is best possible.
\end{proof}

This paper is organized as follows. Section 2 is devoted to establishing results on large cuts (Theorem \ref{maxcut k=3}).
In Section 3, we provide a constructive proof of Theorem \ref{small edges}. We develop some key lemmas in Section 4, which will be used in the proof of our main result.
Sections 5 and 6 present our main results: we first establish a stability result of judicious $2$-partitions (Theorem \ref{stability}), then prove our main result (Theorem \ref{main result}).

\section{Large cuts}
In this section, we prove Theorem \ref{maxcut k=3}. The main idea of the proof is to map a partition of a simple graph to a partition of the corresponding weighted complete graph, while both partitions of them have the same number (or total weight) of crossing edges. First, we compute the maximum cut of complete graphs. 
\begin{definition}
A $k$-partition $(V_1,\ldots,V_k)$ of $G$ is called \textit{balanced} if $\left||V_i|-|V_j|\right|\leq 1$ for all $i,j\in [k]$.
\end{definition}
\begin{lemma} \label{Kn-cut}
Let $n$, $r$, $k$ and $s\in [k-1]$ be integers such that $n=kr+s$, and let $m=e(K_{n})$. Then
  \[
  f_k(K_{n})= \frac{k-1}{k}m+\frac{k-1}{2k}h(m)+\frac{k-1}{2k}-\frac{s(k-s)}{2k}.
  \]
\end{lemma}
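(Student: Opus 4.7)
The plan is to reduce the problem to a short combinatorial optimization and then match two closed-form expressions by direct algebra. Since every pair of vertices of $K_n$ is either a crossing pair or a within-part pair, for any $k$-partition $(V_1,\dots,V_k)$ of $K_n$ we have
\[
e(V_1,\dots,V_k) \;=\; \binom{n}{2} - \sum_{i=1}^{k}\binom{|V_i|}{2}.
\]
Hence maximizing the cut is equivalent to minimizing $\sum_i \binom{|V_i|}{2}$ subject to $\sum_i |V_i|=n$.

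Using convexity of $x\mapsto\binom{x}{2}$ on the integers (or, more concretely, a one-vertex swap: if $|V_i|\ge|V_j|+2$, then moving any vertex from $V_i$ to $V_j$ strictly decreases the sum because it replaces a $(|V_i|-1)$-contribution by a $|V_j|$-contribution), the minimum is attained precisely by a balanced partition. Writing $n=kr+s$ with $0\le s\le k-1$, this balanced partition has $s$ parts of size $r+1$ and $k-s$ parts of size $r$, and a routine simplification yields
\[
f_k(K_n) \;=\; \binom{n}{2} - s\binom{r+1}{2} - (k-s)\binom{r}{2} \;=\; \binom{k}{2}r^2 + (k-1)rs + \binom{s}{2}.
\]

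It remains to verify that the claimed formula agrees with this value. Since $m=\binom{n}{2}$ gives $2m+\tfrac14=(n-\tfrac12)^2$, one has $h(m)=n-1$, and therefore
\[
\frac{k-1}{k}m + \frac{k-1}{2k}h(m) + \frac{k-1}{2k} - \frac{s(k-s)}{2k}
\;=\;\frac{k-1}{k}\binom{n}{2} + \frac{(k-1)n}{2k} - \frac{s(k-s)}{2k}
\;=\;\frac{(k-1)n^2 - s(k-s)}{2k}.
\]
Substituting $n=kr+s$ and expanding produces exactly $\binom{k}{2}r^2+(k-1)rs+\binom{s}{2}$, matching the previous display. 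The only delicate point in the argument is this last piece of bookkeeping: both the convexity reduction and the edge count are essentially immediate, so no genuine obstacle arises.
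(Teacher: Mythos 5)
Your proposal is correct and follows essentially the same route as the paper: evaluate the cut of a balanced partition via $\binom{n}{2}-s\binom{r+1}{2}-(k-s)\binom{r}{2}$ and then check the algebraic identity using $h(m)=n-1$. The only difference is that you explicitly justify optimality of the balanced partition by the one-vertex swap/convexity argument, which the paper leaves implicit.
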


\begin{proof}
Let $(V_1,\ldots,V_k)$ be a balanced partition of $V(G)$. Then 
\begin{align*}
    f_k(K_n) = e(V_1,\ldots,V_k) 
    &=\binom{n}{2} - s\binom{r + 1}{2} - (k - s)\binom{r}{2} \\
    &= \frac{k-1}{k}m + \frac{k-1}{2k}n - \frac{s(k - s)}{6}  \\
    &= \frac{k-1}{k}m + \frac{k-1}{2k}h(m)+ \frac{k-1}{2k}-\frac{s(k-s)}{2k}. \qedhere
\end{align*}
\end{proof}

For any integer $k$, the following lemma gives a tight lower bound for the maximum $k$-partite subgraph. 
\begin{lemma} \label{k-cut}
Let $G$ be a graph with $m$ edges. Let $k\geq 2$ be an integer. Then
\begin{itemize}
    \item[(i)] $f_k(G)\geq \frac{k-1}{k}m+\frac{k-1}{2k}h(m)+\frac{k-1}{2k}-\max\limits_{s \in [k-1] \cup \{0\}}\{\frac{s(k-s)}{2k}\}$, where the lower bound can be achieved by a balanced partition. 
    \item[(ii)] If $G$ is not a complete graph of odd order, then $f_2(G)\geq \frac{m}{2}+\frac{1}{4}h(m)+\frac{1}{4}$, where the lower bound can be achieved by a balanced partition. 
\end{itemize}
\end{lemma}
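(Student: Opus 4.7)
The plan is to apply an averaging argument over uniformly random balanced $k$-partitions, combined with the exact formula for $f_k(K_n)$ from Lemma~\ref{Kn-cut}. Since adding isolated vertices to $G$ changes neither $m$ nor $f_k(G)$, we may assume the vertex set has size $n = kr + s$ for any chosen $s \in \{0,1,\ldots,k-1\}$ and $r$ with $n \geq |V(G)|$. By the symmetry of a random balanced partition, every edge of $G$ is a crossing edge with probability $f_k(K_n)/\binom{n}{2}$. Hence some balanced $k$-partition satisfies
\[
e(V_1,\ldots,V_k) \;\geq\; m \cdot \frac{f_k(K_n)}{\binom{n}{2}} \;=\; \frac{k-1}{k}m + \frac{m\bigl[(k-1)n - s(k-s)\bigr]}{kn(n-1)},
\]
where the second equality uses Lemma~\ref{Kn-cut}.

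For (i), I would select $s = s^{*}$ attaining $\max_{s'} s'(k-s')$ and choose $n$ minimal with $n \equiv s^{*} \pmod k$ and $n \geq \max\{|V(G)|,\, \lceil h(m) \rceil + 1\}$. The identity $h(m)(h(m)+1) = 2m$ makes the averaging quantity collapse exactly to the claimed bound when $n = h(m)+1$, and elementary monotonicity extends this to the nearby integer $n$ with the correct residue. The harder regime is $|V(G)| > h(m)+1$ (sparse $G$), where padding alone is insufficient; here I would remove a minimum-degree vertex $v$ and argue by induction on $|V(G)|$. Reinserting $v$ into the part minimizing $|N(v) \cap V_i|$ contributes at least $\lceil d(v)(k-1)/k \rceil$ new crossing edges, which I would show dominates the change $h(m) - h(m - d(v))$ in the target, completing the induction.

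For (ii), the additional $+1/4$ under $G \neq K_{2r+1}$ is a stability companion to Edwards's bound. Either the averaging step has slack of at least $1/4$ (arising from integrality of cut sizes or from the parity of the degree sequence), or all slack vanishes and one backtracks through the equality conditions to conclude that $G$ must itself be a complete graph of odd order. My principal anticipated obstacle is the sparse regime of (i): the averaging bound $m \cdot f_k(K_n)/\binom{n}{2}$ decays once $n$ exceeds $h(m)+1$, so handling $|V(G)| > h(m)+1$ requires a genuinely separate argument (vertex removal with careful accounting for the concavity of $h$), and guaranteeing that the extremal partition is actually \emph{balanced} rather than merely having high cut value adds a further layer of care.
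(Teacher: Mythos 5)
There is a genuine gap, and it sits exactly where you flagged your own discomfort. The paper's key device is the \emph{opposite} of padding: it repeatedly identifies nonadjacent vertices of $G$ to obtain a weighted complete graph $H$ on $n_1$ vertices with total weight $m$ and $m_1=\binom{n_1}{2}\leq m$. Because $p(m):=q(m)/m$ is decreasing (where $q$ is the target bound), the averaging step gives $m\cdot f_k(K_{n_1})/m_1\geq m\cdot p(m_1)\geq m\cdot p(m)=q(m)$, and every partition of $H$ lifts to a partition of $G$ with the same crossing count. Your padding with isolated vertices forces $n\geq h(m)+1$, hence $\binom{n}{2}\geq m$, and then the same computation gives $m\cdot f_k(K_n)/\binom{n}{2}=m\cdot p\bigl(\binom{n}{2}\bigr)\leq q(m)$ --- the inequality points the wrong way. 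The ``elementary monotonicity'' you invoke to pass from $n=h(m)+1$ to the nearby admissible integer is in fact monotone \emph{against} you: one checks that $\frac{d}{dn}\bigl[m\,f_k(K_n)/\binom{n}{2}\bigr]<0$ at $n=h(m)+1$, so even a slight rounding up drops the bound below the target. Your fallback for the sparse regime also does not close: removing a minimum-degree vertex $v$ of degree $d$ and reinserting it greedily gains at least $\lceil (k-1)d/k\rceil$ crossing edges, but the target increases by $\frac{k-1}{k}d+\frac{k-1}{2k}\bigl(h(m)-h(m-d)\bigr)$; when $k\mid d$ the ceiling provides zero slack and the second (strictly positive) term is uncovered. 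On top of this, greedy reinsertion need not preserve balancedness, which the statement requires.

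Part (ii) is essentially unargued. ``Either the averaging step has slack $1/4$ or one backtracks through equality conditions'' is not a proof: the slack from integrality can be $0$ for non-complete graphs, and the equality analysis is precisely the content one must supply. The paper proves (ii) by induction on $m$: it decomposes the weighted complete graph $H$ into the simple $K_{n_1}$ plus a residual weighted graph $W$ (edge weights reduced by one), treats separately the cases where $W$ is or is not an odd complete graph, and combines $f_2(K_{n_1})$ with the inductive bound on $W$ using the convexity of $h$, i.e.\ $h(m_1)+h(m_2)\geq h(m_1+m_2)$ in the relevant range. Some replacement for this induction is needed; the proposal as written does not contain one.
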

\begin{proof} 
We define 
\[q(m):=\frac{k-1}{k}m+\frac{k-1}{2k}h(m)+\frac{k-1}{2k}-\max_{s \in [k-1] \cup \{0\}}\{\frac{s(k-s)}{2k}\} \quad \text{and} \quad 
p(m):=\frac{q(m)}{m}.\] 
It is easy to show that $q'(m)\geq 0$ and $p'(m) \leq 0$. Identifying any two nonadjacent vertices of a simple graph yields a complete multigraph (with no loop). There is a natural bijection between complete multigraphs and weighted complete graphs with positive integer weight. Let $H$ be the weighted complete graph corresponding to $G$. Let $n_1=\left| H \right|$ and $m_1=\binom{n_1}{2}\leq w(H)$. Note that every partition of $H$ corresponds to a partition of $G$ with the same number of crossing edges. 

First we prove (i). Consider a random balanced partition of $H$ into $k$ parts. The expected number of edges in a maximum $k$-partite subgraph is
\[f_k(K_{n_1})\cdot \frac{w(H)}{m_1}\ge q(m_1)\cdot \frac{w(H)}{m_1}\ge q(w(H)),\]
where the first inequality follows from Lemma \ref{Kn-cut} and the second inequality follows from the fact that $p(m_1)\ge p(w(H))$.
Thus there must be some $k$-partite subgraph of $H$ of size at least $q(w(H))= q(m)$, achieved by a balanced partition of $H$ into $k$ parts.

Now we prove (ii). The proof of (ii) proceeds by induction on $m$. The base case $m=1$ is straightforward. We assume that the statement holds for all graphs with fewer than $m$ edges.
If $G$ is a complete graph of even order, then $f_2(G)=m/2+h(m)/4+1/4$ by Lemma \ref{Kn-cut}. We may therefore assume that $G$ is not a complete graph. Furthermore, assume $G$ has no isolated vertex. Since $G$ is not complete, there exists at least one edge $e\in E(H)$ with weight $w(e)\geq 2$. Let $W$ be the induced subgraph of $H$ such that $V(W)=\{v\in V(H):w_{H}(v)\geq n_1\}$, where $w_{H}(v)$ denotes the total weight incident to $v$ in $H$. Let $w_{W}(e):=w_{H}(e)-1$, where $w_{W}(e)$ denotes the weight of edge $e$ in $W$. Let $n_2=|W|\geq 2$ and $m_2=w(W)=m-m_1\geq 1$.
First assume $W$ is a complete graph on $n_2$ vertices with all edge weight 1 and odd $n_2$. If $n_1$ is even, by Lemma \ref{Kn-cut}, then we have
\begin{align*}
 f_2(G)& \geq f_2(K_{n_1})+f_2(K_{n_2}) \geq \frac{m_1}{2}+\frac{h(m_1)}{4}+\frac{m_2}{2}+\frac{h(m_2)}{4}+\frac{1}{4} \\
 & =\frac{m}{2}+\frac{h(m_1)+h(m_2)+1}{4}\geq \frac{m}{2}+\frac{h(m)}{4}+\frac{1}{4},
\end{align*}
where the last inequality follows from the convexity of the function $h(x)=\sqrt{2x+1/4}-1/2$.
Otherwise $n_1$ is odd. Then 
\begin{align*}
 f_2(G)&\geq f_2(K_{n_1})+f_2(K_{n_2}) = \frac{n_1-1}{2}\cdot \frac{n_1+1}{2}+\frac{n_2-1}{2}\cdot \frac{n_2+1}{2} =\frac{n_1^2-1}{4}+\frac{n_2^2-1}{4} \\
 &\geq \frac{1}{2}\binom{n_1}{2}+\frac{1}{2}\binom{n_2}{2}+\frac{1}{4}\left({\sqrt{2\binom{n_1}{2}+2\binom{n_2}{2}+\frac{1}{4}}-\frac{1}{2}}\right)+\frac{1}{4}\\
 &=\frac{m}{2}+\frac{1}{4}h(m)+\frac{1}{4}.
\end{align*}
So we may assume $W$ is not a complete graph of odd order. 
Let $(W_1,W_2)$ be a balanced partition of $W$ such that $e(W_1,W_2) \geq  m_2/2 + h(m_2)/4 + 1/4$ by inductive hypothesis. Then $f_2(G) \geq f_2(K_{n_1}) + e(W_1,W_2) \geq m_1/2 + h(m_1)/4 + m_2/2 + h(m_2)/4 + 1/4 \geq m/2 + h(m)/4 + 1/4$, which completes the proof. \qedhere
\end{proof}

Theorem \ref{maxcut k=3} is a direct corollary of Lemmas \ref{Kn-cut} and \ref{k-cut} since $\frac{2}{3}m+\frac{1}{3}h(m)+\frac{1}{3}-\max\limits_{s \in [2] \cup \{0\}}\{\frac{s(3-s)}{6}\}=\frac{2}{3}m+\frac{1}{3}h(m)$. Equality in \eqref{m3c} holds if and only if all inequalities in the proof of Lemma \ref{k-cut} are tight, which happens only when $G$ is a complete graph of order $3r+1$ or $3r+2$, by Lemma \ref{Kn-cut}. 

\section{Judicious partitions of graphs with few edges}
In this section, we answer Problem \ref{problem} affirmatively when $m<2k^2$. We begin with some definitions and lemmas. Let $k$ be an integer.

\begin{definition}
Let $(V_1,\ldots,V_k)$ be a $k$-partition of $V(G)$ with $e(V_1)\geq e(V_2)\geq \ldots \geq e(V_k)$. The \textit{lexicographic order} of \((V_1, \ldots, V_k)\), denoted by \(lex(V_1, \ldots, V_k)\), is the $k$-tuple $(e(V_1),\ldots,e(V_k))$. We say that \((V_1, \ldots, V_k)\) has \textit{smaller lexicographic order} than \((W_1, \ldots, W_k)\), denoted by $lex(V_1, \ldots, V_k)<lex(W_1, \ldots, W_k)$,
if and only if there exists a positive integer \(t \leq k\) such that
\begin{enumerate}
    \item[(i)] For every positive integer \(i < t\), $e(V_i)=e(W_i)$, and
    \item[(ii)] $e(V_t)<e(W_t)$.
\end{enumerate}
\end{definition}

\begin{definition}
Let $(V_1,\ldots,V_k)$ be a $k$-partition of $V(G)$ and let $t\geq 1$ be an integer. Let $H_i$ be a subgraph of $G$ and $X_i\notin V(H_i)$ be a vertex set for each $i\in[t]$. We say that $(V_1,\ldots,V_k)$ is 
\textit{$(X_1,H_1,X_2,H_2,\ldots,X_t,H_t)$-decreasing} if the following holds.
\begin{enumerate}
    \item[(i)] For each $j\in [t]$, moving $X_j$ to $H_{j}$ yields a $k$-partition with smaller lexicographic order.
    \item[(ii)] If, in addition, the number of the crossing edges does not decrease, then $(V_1,\ldots,V_k)$ is called \textit{$(X_1,H_1,X_2,H_2,\ldots,X_t,H_t)$-judicious}.
\end{enumerate}
\end{definition}

The next lemma is straightforward and is included here for completeness.
\begin{lemma} \label{l16}
Let $(V_1,\ldots,V_k)$ be a $k$-partition of $V(G)$ with minimum lexicographic order. Then for all $i, j \in [k]$, and for every $v\in V_i$, the following holds.
\begin{enumerate}
    \item[(i)] If $d_{G[V_i]}(v)\geq 1$, then $d_{G[V_j]}(v)\geq 1$.
    \item[(ii)] If $d_{G[V_i]}(v)\ge 2$ and $e(V_i)>e(V_j)$, then $d_{G[V_j]}(v)\geq 2$.
    \item[(iii)] If $d_{G[V_i]}(v)\ge 1$ and $e(V_i)>e(V_j)+1$, then $d_{G[V_j]}(v)\geq 2$.
\end{enumerate}
\end{lemma}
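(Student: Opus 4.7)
The plan is a local-swap argument by contradiction: in each of (i), (ii), (iii), assume the stated conclusion fails and then move $v$ from $V_i$ to $V_j$; I will show that the resulting $k$-partition has strictly smaller lexicographic order, contradicting the minimality of $(V_1,\ldots,V_k)$.

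The first step is to record the effect of the swap: the values $e(V_\ell)$ for $\ell\notin\{i,j\}$ are unchanged, while $e(V_i)$ becomes $e(V_i)-d_{G[V_i]}(v)$ and $e(V_j)$ becomes $e(V_j)+d_{G[V_j]}(v)$. So the lexicographic comparison reduces to a combinatorial question about sorted multisets of $k$ integers. I would then prove the following \emph{single-decrement} fact at the outset: replacing one entry of a sorted tuple $a_1\ge a_2\ge\cdots\ge a_k$ by a strictly smaller value and re-sorting yields a strictly lex-smaller tuple. The verification is immediate: let $\ell'$ be the largest index with $a_{\ell'}$ equal to the changed value; then the first $\ell'-1$ entries are unchanged and the $\ell'$-th entry of the new sorted sequence is strictly smaller than $a_{\ell'}$.

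For (i), if $d_{G[V_j]}(v)=0$ then only $e(V_i)$ changes, strictly decreasing by at least $1$, so the single-decrement fact produces the contradiction directly. For (iii), if $d_{G[V_j]}(v)\le 1$, then from $e(V_j)\le e(V_i)-2$ we see that the new values of $e(V_i)$ and $e(V_j)$ are both at most $e(V_i)-1$, so the count of entries equal to $e(V_i)$ in the sorted sequence strictly drops and the auxiliary fact applies. For (ii), suppose $d_{G[V_j]}(v)\le 1$, $d_{G[V_i]}(v)\ge 2$, and $e(V_i)>e(V_j)$. If the new $e(V_j)$ is strictly less than $e(V_i)$, the argument of (iii) carries over. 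The only borderline case, and the one point requiring a moment's care, is $d_{G[V_j]}(v)=1$ with $e(V_j)=e(V_i)-1$, so that the new $e(V_j)$ equals $e(V_i)$; here I would observe that the net effect on the multiset is to remove one copy of $e(V_j)$ and to insert the new $e(V_i)$, which by $d_{G[V_i]}(v)\ge 2$ is at most $e(V_i)-2<e(V_j)$, again reducing to a single decrement and hence to a lex-smaller tuple. No serious obstacle is expected beyond cleanly isolating this borderline case.
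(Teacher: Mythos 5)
Your proof is correct: the paper omits the argument entirely (declaring the lemma ``straightforward''), and your move-$v$-from-$V_i$-to-$V_j$ swap, together with the single-decrement observation about sorted tuples and the careful treatment of the borderline case $d_{G[V_j]}(v)=1$, $e(V_j)=e(V_i)-1$ in (ii), is exactly the intended local-exchange argument. No gaps.
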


\begin{remark}
The condition of Lemma \ref{l16} can be replaced by: Let $(V_1,\ldots,V_k)$ be a $k$-partition of $V(G)$ maximizing $e(V_1,\ldots,V_k)$ with minimum lexicographic order.
\end{remark}

Now we proceed to the proof of Theorem \ref{small edges}. 
\begin{proof}[Proof of Theorem \ref{small edges}]
We divide the proof into three cases according to the size of $m$.
\medskip \begin{case}
$m<k^2/{2}+k/2$.

Note that $h(m)<k$ when $m<k^2/{2}+k/2$. By \eqref{min-eVi}, one can find a $k$-partition $(V_{1},\ldots,V_{k})$ satisfying (i). In particular, $\max_{1\le i \le k}\{e({{V}_{i}})\}\le m/k^2+(k-1)h(m)/(2k^2)<1/2+1/(2k)+(k-1)/(2k)=1$. Hence, $e({V}_{i}) =0$ for all $i\in [k]$ and so $e(V_1,\ldots,V_k)=m$. Thus, (ii) holds.
\end{case}
\medskip \begin{case}
$k^2/{2}+k/2 \le m < 3k^2/2$.

Let $(V_{1},\ldots,V_{k})$ be a partition maximizing $e(V_{1},\ldots,V_{k})$ with minimum lexicographic order. Then (ii) holds by \eqref{max-k-cut}. Define $x:=|\{i \in [k]: e(V_i)\ge 2\}|$, $y:=|\{i \in [k]: e(V_i)=1\}|$ and $z:=|\{i \in [k]: e(V_i)=0\}|$. So we have
\begin{equation} \label{l2e3}
    x+y+z=k.
\end{equation}
Note that $m/k^2+(k-1)h(m)/(2k^2)\ge 1$ when $m\ge k^2/{2}+k/2$. 
If $x=0$, then $\max_{1\le i \le k}\{e({{V}_{i}})\}\le 1 \le m/k^2+(k-1)h(m)/(2k^2)$ and we are done. Now suppose $x\ge1$. For brevity, denote by $X_i$, $Y_i$ and $Z_i$, the induced subgraph $G[V_i]$ for each $i$ satisfying $e(V_i)\ge 2$, $e(V_i)=1$ and $e(V_i)=0$, respectively. Note that each $X_i$ contains a copy of either $2K_2$ or $P_3$. 

\medskip \begin{claim} \label{c1}
$e(X_i,Y_j)\ge 5$, for every $i\in [x]$, $j \in [y]$. 

By Lemma \ref{l16}, we already have $e(X_i,Y_j)\ge 4$. Suppose, for contradiction, that $e(X_{i'},Y_{j'})=4$ for some $i' \in [x]$ and $j'\in[y]$. Let $v_1v_2,v_3v_4\in E(X_{i'})$ (possibly $v_2=v_3$) and $v_5v_6\in E(Y_{j'})$. Note that $e\left(\{v_1,v_2,v_3,v_4\},Y_{j'}\right)\geq 4$. Then $e\left(\{v_1,v_2,v_3,v_4\},Y_{j'}\right)=e(X_{i'},Y_{j'})=4$. Without loss of generality, assume $v_1v_6 \notin E(G)$. First suppose $ v_2 \neq v_3$. Then $(V_1,\ldots, V_k)$ is \( (\{v_2, v_4\}, Y_{j'}, v_6, X_{i'}) \)-judicious when $v_3v_6 \notin E(G)$ or \( (\{v_2, v_3\}, Y_{j'}, v_6, X_{i'}) \)-judicious when $v_3v_6 \in E(G)$. If \( v_2 = v_3 \), then $(V_1,\ldots, V_k)$ is \( (v_4, Y_{j'}, v_6, X_{i'})\)-judicious when $v_2v_6\notin E(G)$ or \( (v_2, Y_{j'}, v_6, X_{i'})\)-judicious when $v_2v_6\in E(G)$. In all cases we obtain a contradiction to the minimality of $lex(V_1,\ldots,V_k)$, and therefore \( e(X_i, Y_j) \geq 5 \). \hfill $\blacksquare$ 
\end{claim}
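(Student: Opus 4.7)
The plan is to first use Lemma \ref{l16} to obtain the weaker bound $e(X_i,Y_j)\ge 4$, and then to rule out the equality case by exhibiting a judicious sequence of two vertex moves, contradicting the lexicographic minimality of $(V_1,\ldots,V_k)$. Since $e(V_i)\ge 2$, the subgraph $X_i$ contains either a matching $2K_2$ or a path $P_3$; let $v_1v_2,v_3v_4\in E(X_i)$ be its two edges (with $v_2=v_3$ in the $P_3$ case), and let $v_5v_6$ be the unique edge of $Y_j$. Since $e(V_i)>e(V_j)$, Lemma \ref{l16}(i) forces each endpoint $v_\ell$ to have a neighbor in $Y_j=\{v_5,v_6\}$, and in the $P_3$ case Lemma \ref{l16}(ii) further forces the degree-$2$ vertex $v_2=v_3$ to be adjacent to both $v_5$ and $v_6$. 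Summing either way yields $e(X_i,Y_j)\ge 4$.

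Suppose for contradiction that $e(X_{i'},Y_{j'})=4$ for some $i'\in[x]$ and $j'\in[y]$. Equality in the above counts forces three structural constraints: (a) no vertex of $V_{i'}\setminus\{v_1,v_2,v_3,v_4\}$ sends an edge to $V_{j'}$; (b) in the $2K_2$ case, each $v_\ell$ ($\ell=1,2,3,4$) has exactly one neighbor in $\{v_5,v_6\}$; (c) in the $P_3$ case, $v_2$ is joined to both $v_5$ and $v_6$, while each of $v_1,v_4$ has exactly one neighbor there. After relabeling, I may assume $v_1v_6\notin E(G)$, so that $v_1v_5\in E(G)$.

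The contradicting modification is a compound of two relocations. The second move always shifts $v_6$ from $V_{j'}$ to $V_{i'}$, destroying the edge $v_5v_6$; the adjacency pattern just established controls the number of new internal edges $v_6$ creates inside $V_{i'}$, so this move balances the gain of making $v_5v_6$ crossing. The first move relocates one or two vertices from $V_{i'}$ to $V_{j'}$, destroying one of $v_1v_2, v_3v_4$ and producing a boundary gain that compensates whatever may be lost in the second move. The choice splits into four subcases: when $v_2\ne v_3$, move $\{v_2,v_4\}$ if $v_3v_6\notin E(G)$ and $\{v_2,v_3\}$ if $v_3v_6\in E(G)$; when $v_2=v_3$, move the singleton $\{v_4\}$ if $v_2v_6\notin E(G)$ and $\{v_2\}$ if $v_2v_6\in E(G)$.

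The main obstacle is the edge-accounting verification: in each subcase, one must simultaneously check that the first relocation alone strictly lowers the lexicographic order (in particular $e(V_{i'})$ drops and nothing else rises to match it), that the subsequent relocation of $v_6$ does not reverse this drop, and that the net change in $e(V_1,\ldots,V_k)$ is nonnegative. This reduces to a short but careful case-by-case count exploiting the exact-one-neighbor dichotomy from (b)--(c) together with the specific known adjacencies $v_1v_5$ and (in the relevant subcase) $v_3v_5$ or $v_3v_6$.
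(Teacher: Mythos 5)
Your proposal follows the paper's proof essentially verbatim: the same appeal to Lemma \ref{l16} for the preliminary bound $e(X_i,Y_j)\ge 4$, the same equality analysis concentrating all four edges on $\{v_1,v_2,v_3,v_4\}$, and the same four compound moves in the same four subcases, so the approach is correct and identical. Two small inaccuracies to repair when you carry out the deferred accounting: equality only forces each $v_\ell$ to have exactly one neighbour in $V_{j'}$ (that neighbour may be an isolated vertex of $Y_{j'}$, so $v_1v_5\in E(G)$ does not follow from $v_1v_6\notin E(G)$ and should not be used), and the intermediate partition after the first relocation alone need not have smaller lexicographic order (e.g.\ $e(V_{j'})$ can rise to $3>e(V_{i'})$ when $X_{i'}=2K_2$) --- the verification should be that the \emph{compound} move strictly decreases the lexicographic order without decreasing the number of crossing edges, which is what the contradiction requires and which the stated adjacency constraints do deliver.
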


\medskip \begin{claim} \label{c2} The following holds.
\begin{enumerate}
\item[(i)] $e(X_i, Z_j) \geq 6$, for every $i \in [x]$, $j \in [z]$.
\item[(ii)]$e(X_i, X_j) \geq 4$, for distinct $i, j \in [x]$.
\item[(iii)]$e(Y_i, Y_j) \geq 4$, for distinct $i, j \in [y]$. 
\item[(iv)]$e(Z_i, Z_j) \geq 3$, for distinct  $i, j \in [z]$.
\item[(v)] $e(Y_i,Z_j)\ge 3$, for every $i\in [y]$, $j \in [z]$.
\end{enumerate}
\end{claim}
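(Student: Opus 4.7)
My plan is to prove all five inequalities in Claim \ref{c2} by contradiction, in the same style as Claim \ref{c1}. In each case I assume the bound fails by one, use Lemma \ref{l16} to narrow the structure of the two parts in question, and then exhibit either a single-vertex move or a swap of two vertices that produces a new $k$-partition with strictly smaller lexicographic order and no fewer crossing edges, contradicting the minimality of $lex(V_1,\ldots,V_k)$ (together with Remark \ref{l16}'s extension to max-crossing partitions).

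For (i), because $e(V_i)\ge 2 > e(V_j)+1 = 1$, Lemma \ref{l16}(iii) applies to every positive-degree vertex $v\in V_i$ and forces $d_{G[V_j]}(v)\ge 2$. Since $X_i$ contains $P_3$ or $2K_2$ and therefore has at least three positive-degree vertices, we obtain $e(X_i,Z_j)\ge 6$ at once. For (ii), Lemma \ref{l16}(i) gives $e(X_i,X_j)\ge 3$, and when $e(V_i)>e(V_j)$ Lemma \ref{l16}(ii) applied to the center of a $P_3$ in $V_i$ contributes a fourth edge. The only remaining case is $e(V_i)=e(V_j)=2$ with both parts being a $P_3$ plus isolated vertices; in this extremal configuration each positive-degree vertex has exactly one crossing edge to the other part, and simultaneously swapping the two $P_3$-centers $v_2\in V_i$ and $u_2\in V_j$ drops each of the new internal counts to $1-[v_2u_2\in E(G)]\le 1$, so lex strictly decreases while crossings strictly increase, a contradiction. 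Parts (iii) and (v) follow the same template: Lemma \ref{l16}(i) furnishes the initial bound, a short case analysis pins down the few extremal configurations (for (iii), two $K_2$-plus-isolates parts joined by a matching of two crossing edges; for (v), $V_i = K_2+$ isolates with exactly two crossing edges from the $K_2$-endpoints to $V_j$), and in each case an explicit swap of a pair of endpoints (or a single-vertex move) strictly reduces lex while not decreasing crossings.

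The main obstacle is (iv), since Lemma \ref{l16} gives no direct information when both parts are empty inside, and a one-vertex move between $Z_i$ and $Z_j$ alone cannot improve lex (both parts already have internal edge count zero, and any vertex move between them either leaves both at zero or raises one above zero). To overcome this I would exploit the standing assumption $x\ge 1$ of Case 2: fix a positive-degree vertex $v$ in some $X_l$, so that by Lemma \ref{l16}(iii) $v$ has $d_{G[Z_i]}(v), d_{G[Z_j]}(v)\ge 2$. Under the failure assumption $e(Z_i,Z_j)\le 2$, I expect a carefully chosen combined operation—swapping $v$ with a low-$Z$-degree vertex of $Z_i$ or $Z_j$, possibly followed by a further rearrangement among the $Z$-parts—to yield a partition with strictly smaller lex and no fewer crossing edges. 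The delicate bookkeeping of how such a combined move simultaneously affects $e(V_l), e(Z_i), e(Z_j)$ and the crossing total, while keeping every part non-empty, is the technical heart of the case and the step I expect to require the most care.
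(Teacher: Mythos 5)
Your overall strategy is the same as the paper's: assume a bound fails, use Lemma \ref{l16} to constrain the structure, and exhibit a lex-decreasing, crossing-preserving move. Parts (i) and (ii) are essentially right and match the paper (for (ii) the paper simply takes a $P_3$ with centers $v_2,v_5$ in each part and checks that the swap $(v_2,X_{j'},v_5,X_{i'})$ is judicious; note that when $e(X_{i'},X_{j'})=3$ each part has at most $3$ positive-degree vertices, so the remaining equal-size case also includes two $K_3$'s, not only two $P_3$'s, though the same swap handles it). Part (iii) is also the right idea, but the extremal situation is not just ``a matching of two crossing edges'': with $e(Y_{i'},Y_{j'})\le 3$ all crossing edges run between the two $K_2$'s in several possible configurations, and the paper's case split on $d_{Y_{j'}}(u_1)\in\{1,2\}$ is what actually closes it.

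The genuine gaps are in (iv) and (v). For (iv) you correctly identify the key idea (no move inside $Z_{i'}\cup Z_{j'}$ can decrease lex, so one must import the positive-degree vertex $p_1\in V(X_1)$, which by Lemma \ref{l16}(iii) has $d_{Z_{i'}}(p_1)\ge 2$), but you leave the actual move unspecified; the paper's move is concrete: if $e(N_{Z_{i'}}(p_1),Z_{j'})=0$, shift $p_1$ into $Z_{i'}$ and $N_{Z_{i'}}(p_1)$ into $Z_{j'}$; otherwise keep one neighbor $p_2$ of $p_1$ that has a neighbor in $Z_{j'}$ and shift $N_{Z_{i'}}(p_1)\setminus\{p_2\}$ into $Z_{j'}$. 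The bookkeeping you flag as delicate is exactly what remains to be done, so (iv) is a plan, not a proof. For (v) your claim that ``an explicit swap of a pair of endpoints (or a single-vertex move)'' always works is false in the critical sub-case: if the two crossing edges from the $K_2=w_2w_3$ of $Y_{i'}$ go to a \emph{common} vertex $w_4\in Z_{j'}$, then moving $w_2$ (or swapping $w_2$ with $w_4$) leaves the sorted pair of internal counts at $(1,0)$ and does not decrease lex. The paper handles this by the same external-vertex trick as in (iv): it brings in $w_1\in V(X_1)$ and performs $(w_1,Z_{j'},N_{Z_{j'}}(w_1)\setminus\{w_4\},Y_{i'})$. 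So (v) needs the hypothesis $x\ge 1$ just as much as (iv) does, and your sketch as written would stall there.
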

We postpone the proof to Appendix \ref{appendix A}. \hfill $\blacksquare$

By Claims \ref{c1}, \ref{c2}, and \eqref{l2e3}, we have 
\begin{align}\label{l2e4}
  m\ge & 5xy+6xz+3yz+4\binom{x}{2}+4\binom{y}{2}+3\binom{z}{2}+2x+y \notag \\
=&\frac{3}{2}(x^2+y^2+z^2+2xy+2yz+2xz)+\frac{1}{2}x^2+\frac{1}{2}y^2+(2x-1)y+(3x-\frac{3}{2})z \notag \\
\ge &\frac{3}{2}k^2+\frac{1}{2}+\frac{1}{2}y^2+y+\frac{3}{2}z 
%\ge &\frac{3}{2}k^2+\frac{1}{2} \notag \\
>\frac{3}{2}k^2,   \notag 
\end{align}
a contradiction.
\end{case}

\medskip \begin{case}
$3k^2/2\le m <2k^2$.

Let $(V_{1},\ldots,V_{k})$ be a partition with minimum lexicographic order. Define $w:=|\{i \in [k]: e(V_i)\ge 3\}|$, $x:=|\{i \in [k]: e(V_i)= 2\}|$, $y:=|\{i \in [k]: e(V_i)=1\}|$ and $z:=|\{i \in [k]: e(V_i)=0\}|$. So we have
\begin{equation} \label{l2e5}
    w+x+y+z=k.
\end{equation}
For convenience, for every $i$, write $W_i$, $X_i$, $Y_i$ and $Z_i$ for $G[V_i]$, if $e(V_i)\ge 3$, $e(V_i)=2$, $e(V_i)=1$ and $e(V_i)=0$, respectively. First we suppose $w\ge 1$.  
\medskip \begin{claim} \label{c7} The following holds.
\begin{itemize}
    \item[(i)] $e(W_i,W_j)\ge 4$, for distinct $i, j \in [w]$.
    \item[(ii)] $\min\{e(W_i,X_j),e(W_i,Y_k),e(W_i,Z_l)\}\ge 6$, for every $i\in[w]$, $j\in[x]$, $k\in[y]$, $l\in[z]$.
    \item[(iii)] $e(Z_i,Z_j)\ge 4$, for distinct $i, j \in [z]$.
    \item[(iv)] $e(Y_i,Z_j)\ge 4$, for every $i\in [y]$, $j \in [z]$.
\end{itemize}
\end{claim}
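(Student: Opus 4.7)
The plan is to establish each of the four inequalities in Claim \ref{c7} by the same two-step strategy used in the proofs of Claims \ref{c1} and \ref{c2}. First, apply Lemma \ref{l16} directly to the two induced subgraphs in question to obtain a baseline lower bound on the number of crossing edges between them. Second, whenever this baseline falls short of the stated threshold, assume equality holds and exhibit a judicious sequence of vertex moves---typically two vertices one way and one vertex the other way---producing a $k$-partition of strictly smaller lexicographic order, contradicting the minimality of $lex(V_1, \ldots, V_k)$.

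For item (i), both $W_i$ and $W_j$ have at least three edges and therefore at least three non-isolated vertices, so Lemma \ref{l16}(i) already gives $e(W_i, W_j) \geq 3$. When $e(V_i) > e(V_j)$, Lemma \ref{l16}(ii) upgrades each degree-$\geq 2$ vertex of $W_i$ to degree $\geq 2$ in $W_j$ and the bound $\geq 4$ follows unless $W_i$ has exactly three non-isolated vertices, i.e.\ $W_i = K_3$. The remaining case $W_i = W_j = K_3$ with $e(W_i, W_j) = 3$ forces the crossing edges to form a perfect matching between the two triangles; a short swap of one $W_i$-vertex with its matched neighbour in $W_j$ then yields a smaller lex order, a contradiction.

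Item (ii) is the heart of the argument. Writing $n_i \geq 3$ for the number of non-isolated vertices of $W_i$, the strict gaps $e(V_i) \geq 3 > e(V_l) + 1$ and $e(V_i) \geq 3 > e(V_k) + 1$ activate Lemma \ref{l16}(iii) for $Z_l$ and $Y_k$, so every non-isolated vertex of $W_i$ has at least two neighbours in each, yielding $e(W_i, Z_l), e(W_i, Y_k) \geq 2 n_i \geq 6$. The subtle sub-case is $e(W_i, X_j) \geq 6$ when $e(V_i) = 3$, because the strict inequality $e(V_i) > e(V_j) + 1$ just fails. A short case analysis on the shape of $W_i$ (which, having exactly three edges, must be one of $K_3$, $P_4$, $K_{1,3}$, or $3K_2$) combined with Lemma \ref{l16}(i)--(ii) settles every shape except the star; for $W_i = K_{1,3}$, if only five crossing edges existed, one selects a leaf with a single neighbour in $X_j$ and applies the two-for-one exchange in the style of Claim \ref{c1} to reach a contradiction.

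For items (iii) and (iv), Lemma \ref{l16} alone provides only the weak baselines $e(Z_i, Z_j) \geq 0$ and $e(Y_i, Z_j) \geq 2$, because the $Z$-parts are edgeless. The extra slack needed to push these to $4$ comes from the standing hypothesis $w \geq 1$: if either bound were violated, one can route a judicious exchange through a $W$-part $W_l$, moving a non-isolated vertex of $W_l$ into the sparse $Z$- or $Y$-part and pulling a compensating vertex back, with $e(V_l) \geq 3$ absorbing the perturbation. The main obstacle will be the $K_{1,3}$ sub-case of (ii), where the centre of the star on its own carries three of the missing crossing edges and the asymmetry between it and the three leaves must be carefully balanced in the two-for-one exchange; the routing-through-$W_l$ arguments in (iii)--(iv) also require care but follow the same mould as Claim \ref{c2}'s proof.
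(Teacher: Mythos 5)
Your overall strategy is the paper's: Lemma \ref{l16} for the baseline, then a lexicographic-order exchange in the equality cases. Items (i) and the $Y_k$/$Z_l$ parts of (ii) are essentially correct (your enumeration of three-edge shapes omits $K_2\sqcup P_3$, but that case passes the degree count anyway). Two steps, however, have genuine gaps. In the $K_{1,3}$ subcase of (ii), your exchange --- move a leaf $l$ with unique neighbour $u\in X_j$ and pull $u$ back --- need not decrease the lexicographic order: the five crossing edges split as two from the centre and one from each leaf, with nothing preventing $u$ from also being adjacent to the centre and to the other leaves, so after the swap $e(V_i)$ can land at $4$ or $5$, larger than the original $3$. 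The paper's exchange moves the \emph{centre} out (killing all three edges of $W_i$ at once) and pulls back its exactly two neighbours $a,b\in X_j$; that degree information is what keeps the counts under control, and it is not recoverable from a leaf-based move.

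For (iii) and (iv), ``route a judicious exchange through a $W$-part'' is a gesture, not an argument. The proof actually proceeds in two stages that you do not supply: first the bounds $e(Z_i,Z_j)\ge 3$ and $e(Y_i,Z_j)\ge 3$, obtained by rerunning Claim \ref{c2}(iv),(v) with $W_1$ in place of $X_1$ (using that every non-isolated $p_1\in W_1$ has at least two neighbours in each $Z$-part by Lemma \ref{l16}(iii), since $e(W_1)\ge 3>0+1$); second, in the equality case $=3$, a specific choice of which neighbours of $p_1$ to send to the other $Z$-part (resp.\ the identification of a common neighbour $w_4\in Z_{j'}$ of the edge $w_2w_3\in E(Y_{i'})$, and the move of $N_{Z_{j'}}(w_1)\setminus w_4$ into $Y_{i'}$). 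Without pinning these down, the assertion that ``$e(V_l)\ge 3$ absorbs the perturbation'' is unverified: moving a vertex of $W_l$ into an edgeless part creates as many edges there as it has neighbours, and one must check that the compensating move returns every affected part to a value yielding strictly smaller lexicographic order. So the skeleton is right, but the decisive exchanges in (ii) (star case), (iii) and (iv) are either wrong or missing.
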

We postpone the proof to Appendix \ref{appendix A}. \hfill $\blacksquare$

By Claims \ref{c1}, \ref{c2}, \ref{c7} and (\ref{l2e5}), we have 
\begin{align}\label{l2e6}
 m\ge & 6wx+6wy+6wz+5xy+6zx+4yz+4\binom{w}{2}+4\binom{x}{2}+4\binom{y}{2}+4\binom{z}{2}+3w+2x+y \notag \\
=& 2(w+x+y+z)^2+w+(2w-1)y+2(w-1)z+xy+2xz+2wx \notag \\
\ge & 2k^2+1 > 2k^2,   \notag 
\end{align}
a contradiction. Thus, $w=0$. 
Note that $h(m)>\sqrt{3}k-1/2$ as $m\ge 3k^2/2$. Hence, $m/k^2+(k-1)h(m)/(2k^2)>3/2+(k-1)(\sqrt{3}k-1/2)/(2k^2)>2\ge \max_{1\le i \le k}\{e({{V}_{i}})\}$. Thus, Theorem \ref{small edges} holds. Now we have $w=0$. Suppose $x\ge 1$.

\medskip\begin{claim} \label{c41}
$e(X_i,X_j)\ge 6$, for distinct $i, j \in [x]$.
\end{claim}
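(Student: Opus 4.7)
The plan is to argue by contradiction. Suppose $e(X_i,X_j) = e(V_i,V_j) \leq 5$ for some distinct $i,j\in[x]$; I will produce a new partition with strictly smaller lexicographic order than $(V_1,\ldots,V_k)$, contradicting minimality. Since $e(V_i)=e(V_j)=2$, each of $X_i,X_j$ is isomorphic to either $2K_2$ or $P_3$. By Lemma \ref{l16}(i), every non-isolated vertex $u\in V_i$ satisfies $d_{V_j}(u)\geq 1$, and symmetrically for $V_j$. Writing $E_1$ for the set of edges from $V(X_i)$ to $V_j$ and $E_2$ for the set of edges from $V_i$ to $V(X_j)$, this gives $|E_1|\geq|V(X_i)|$ and $|E_2|\geq|V(X_j)|$, and inclusion-exclusion yields $|E_1\cap E_2|\geq |V(X_i)|+|V(X_j)|-e(V_i,V_j)\geq |V(X_i)|+|V(X_j)|-5$.

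The key move is a swap. For any edge $uv\in E(G)$ with $u\in V_i$ and $v\in V_j$, exchanging $u$ and $v$ yields a new $k$-partition with $e(V_i')=1+d_{V_i}(v)-d_{V_i}(u)$ and $e(V_j')=1+d_{V_j}(u)-d_{V_j}(v)$. Since $w=0$ forces every part of $(V_1,\ldots,V_k)$ to have at most $2$ edges, such a swap strictly decreases the lex order whenever $\max\{e(V_i'),e(V_j')\}\leq 2$ and at least one of the two new values is strictly less than $2$. Equivalently, it suffices to find an edge $uv$ with $u\in V(X_i)$, $v\in V(X_j)$ satisfying $d_{V_j}(u)\leq d_{V_j}(v)+1$ and $d_{V_i}(v)\leq d_{V_i}(u)+1$, with at least one inequality strict.

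I would then perform a case analysis on $(X_i,X_j)\in\{(2K_2,2K_2),(2K_2,P_3),(P_3,P_3)\}$. In each case, inclusion-exclusion forces many cross-edges to lie directly between $V(X_i)$ and $V(X_j)$. A degree-counting argument using $|E_1|,|E_2|\leq 5$ then shows that almost all non-isolated vertices on each side have degree exactly $1$ to the other part; for an edge joining two such low-degree vertices the swap gives both $e(V_i')$ and $e(V_j')$ at most $1$, producing a strictly smaller lex order. The remaining degenerate configurations, in which all edges of $E_1\cap E_2$ are concentrated on one or two high-degree vertices, are ruled out by observing that the other non-isolated vertices (guaranteed by Lemma \ref{l16}(i) to each have at least one edge across) must then send their required edges into the isolated portion of the opposite part, forcing $e(V_i,V_j)>5$.

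The main obstacle is the $(P_3,P_3)$ subcase, where Lemma \ref{l16}(i) only provides the weaker initial bound $|E_1|,|E_2|\geq 3$ instead of $\geq 4$, so the inclusion-exclusion inequality on $|E_1\cap E_2|$ is also weaker. To close the resulting gap I would additionally exploit swaps involving the degree-$2$ central vertex of a $P_3$: such a swap removes two internal edges of $V_i$ at once, which provides extra slack in the swap identity and forces a lex decrease even when the cross-edge distribution between $V(X_i)$ and $V(X_j)$ is nearly uniform. After this refinement the same counting argument closes all three subcases.
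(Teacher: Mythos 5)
Your framework is legitimate and matches the paper's in spirit: any repartition with strictly smaller lexicographic order contradicts the choice of $(V_1,\ldots,V_k)$, your formula $e(V_i')=1+d_{V_i}(v)-d_{V_i}(u)$, $e(V_j')=1+d_{V_j}(u)-d_{V_j}(v)$ for an adjacent swap is correct, and so is your criterion for when such a swap decreases the lex order. Your counting does close the $(2K_2,2K_2)$ and $(2K_2,P_3)$ cases: there Lemma \ref{l16}(i) forces the cross-degree sequence on the $2K_2$ side to be $(1,1,1,1)$ or $(2,1,1,1)$, and a good direct edge can always be found.

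The gap is in the $(P_3,P_3)$ case, exactly the one you flag, and your proposed refinement does not close it. Take $X_i=u_1u_2u_3$ and $X_j=u_4u_5u_6$ with centers $u_2,u_5$, and cross edges $u_2u_4,\,u_2u_5,\,u_2u_6,\,u_1u_5,\,u_3u_5$; on these six vertices the graph is the join of the edge $u_2u_5$ with the independent set $\{u_1,u_3,u_4,u_6\}$. This configuration has exactly $5$ cross edges and satisfies every consequence of Lemma \ref{l16} that you invoke, yet every single-vertex swap fails: $u_2\leftrightarrow u_5$ returns internal counts $\{2,2\}$; each of $u_2\leftrightarrow u_4$, $u_2\leftrightarrow u_6$, $u_1\leftrightarrow u_5$, $u_3\leftrightarrow u_5$ creates a part with $3$ internal edges; and the non-adjacent swaps also return $\{2,2\}$. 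In particular, your disposal of the "concentrated" case is wrong here: the cross-degree-$1$ vertices $u_1,u_3,u_4,u_6$ send their required edges to the high-degree center of the opposite part, not into its isolated portion, so the total stays at $5$. The paper escapes precisely this configuration with an unbalanced exchange, moving the set $\{u_1,u_3\}$ to $X_j$ while bringing $u_5$ to $X_i$ (yielding counts $\{1,0\}$); some move of a set of vertices of this kind is genuinely needed, and single-vertex swaps along cross edges cannot suffice.
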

We postpone the proof to Appendix \ref{appendix A}. \hfill $\blacksquare$

By Claims \ref{c2} and \ref{c41},
\begin{align}
     m\ge & 5xy+6xz+3yz+6\binom{x}{2}+4\binom{y}{2}+3\binom{z}{2}+2x+y \notag \\
     = & (\sqrt{3}x+\sqrt{2}y+\frac{3}{2\sqrt{2}}z)^2-(x+y+z)+\frac{3}{8}z^2+(5-2\sqrt{6})xy+\left((6-\frac{3\sqrt{6}}{2})x-\frac{1}{2}\right)z \notag \\
     \ge &(\sqrt{3}x+\sqrt{2}y+\frac{3}{2\sqrt{2}}z)^2-k. \notag
\end{align}
Thus, $\sqrt{m+k}\ge \sqrt{3}x+\sqrt{2}y+3z/(2\sqrt{2})=\sqrt{3}x+\sqrt{2}y+3(k-x-y)/(2\sqrt{2})\ge 3k/(2\sqrt{2})+(\sqrt{3}/{2}-{3}/(4\sqrt{2}))(2x+y),$ which implies
\begin{equation} \label{l2e7}
    2x+y\le \left(\sqrt{m+k}-\frac{3}{2\sqrt{2}}k\right)/\left(\frac{\sqrt{3}}{2}-\frac{3}{4\sqrt{2}}\right).
\end{equation}
Let $u_1(m):=m-\left(\sqrt{m+k}-{3k}/(2\sqrt{2})\right)/\left({\sqrt{3}}/{2}-{3}/(4\sqrt{2})\right)-\left((k-1)\sqrt{2m+1/4}\right)/{2}$. Since $u_1'(m)> 0$, we have $u_1(m)\ge u_1(3k^2/2)>-(k-1)/{4}-(k-2)^2/{8}$. By \eqref{l2e7}, we have $m-(2x+y)k-\left((k-1)\sqrt{2m+1/4}\right)/{2} \ge u_1(m) >-(k-1)/{4}-(k-2)^2/{8}$.
Thus, 
\begin{align}
e(V_1,\ldots,V_k)=& m-(2x+y) 
    = \frac{k-1}{k}m+\frac{1}{k}m-(2x+y)-\frac{k-1}{2k}\sqrt{2m+\frac{1}{4}}+\frac{k-1}{2k}\sqrt{2m+\frac{1}{4}}  \notag \\
    =&\frac{k-1}{k}m+\frac{1}{k}\left(m-(2x+y)k-\frac{k-1}{2}\sqrt{2m+\frac{1}{4}}\right)+\frac{k-1}{2k}\sqrt{2m+\frac{1}{4}} \notag \\
    \ge &\frac{k-1}{k}m+\frac{1}{k}\left(-\frac{k-1}{4}-\frac{(k-2)^2}{8}\right)+\frac{k-1}{2k}\sqrt{2m+\frac{1}{4}} \notag \\
    =&\frac{k-1}{k}m+\frac{k-1}{2k}h(m)-\frac{(k-2)^2}{8k}. \notag
\end{align} 
Hence, $(V_1,\ldots,V_k)$ is a partition satisfying Theorem \ref{small edges}. Now we have $x=0$.
\medskip \begin{claim} \label{c42} 
The following holds.
\begin{itemize}
    \item[(i)] $e(Y_i,Y_j)\ge 4$, for distinct $i, j \in [y]$. 
    \item[(ii)] $e(Y_i,Z_j)\ge 2$, for every $i\in [y]$, $j \in [z]$.
    \item[(iii)] $e(Z_i,Z_j)\ge 1$, for distinct $i, j \in [z]$.
\end{itemize}

The proof of (i) is the same as Claim \ref{c2} (iii). The proofs of (ii) and (iii) are direct conclusions of Lemma \ref{l16} and readers can verify them easily. \hfill $\blacksquare$
\end{claim}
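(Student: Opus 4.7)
The plan is to handle all three parts via Lemma \ref{l16}(i) combined with short lex-decreasing moves against the minimum-lex partition $(V_1, \ldots, V_k)$; recall that under the standing assumption $w = x = 0$ every part has at most one internal edge, so $e(V_\ell) \in \{0, 1\}$ for all $\ell$.

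For (i), write the unique edges of $V_i$ and $V_j$ as $v_1 v_2$ and $v_3 v_4$. Lemma \ref{l16}(i) applied to each of $v_1, v_2, v_3, v_4$ yields $d_{V_j}(v_1), d_{V_j}(v_2), d_{V_i}(v_3), d_{V_i}(v_4) \geq 1$, which already forces $e(V_i, V_j) \geq 2$. To sharpen this to $4$, I would assume toward contradiction that $e(V_i, V_j) \leq 3$; then the crossing edges are concentrated (essentially) inside $\{v_1, v_2\} \times \{v_3, v_4\}$, so only a handful of configurations remain. In each configuration exhibit a short sequence of at most two moves -- typically of the form ``shift $v_1$ to $V_j$, then shift a suitably chosen $v_\alpha \in \{v_3, v_4\}$ to $V_i$'' -- and verify that the resulting partition has strictly smaller lex order, contradicting minimality. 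A quick sanity check: if $e(V_i, V_j) = 2$ with edges $v_1 v_3, v_2 v_4$, then the moves $v_1 \to V_j$ followed by $v_3 \to V_i$ reduce the two edge counts $(1,1)$ to $(0,0)$; similar two-step swaps dispose of the remaining $e(V_i,V_j)\in\{2,3\}$ configurations. This replicates the argument already used to prove Claim \ref{c2}(iii).

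For (ii), apply Lemma \ref{l16}(i) to the two endpoints $v_1, v_2$ of the unique edge in $V_i$: each satisfies $d_{V_i}(\cdot) \geq 1$, so each has at least one neighbor in $V_j$; since $v_1 \neq v_2$ these contribute two distinct crossing edges, yielding $e(V_i, V_j) \geq 2$. For (iii), $V_i$ and $V_j$ are both $0$-edge parts; if $e(V_i, V_j) = 0$ one first reduces (w.l.o.g.) to graphs without isolated vertices, and then applies Lemma \ref{l16}(i) to a vertex $u \in V_i$ together with a neighbor of $u$ in some $V_\ell$ with $\ell \neq i,j$ to produce a single-vertex move that strictly lowers the lex order, contradicting minimality.

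The main obstacle is the case analysis in part (i): one must enumerate the admissible crossing patterns in the cases $e(V_i, V_j) \in \{2, 3\}$ and, for each pattern, identify a two-step move that actually decreases the lex order. This is hands-on but routine, and parallels the appendix analysis already carried out for Claim \ref{c2}(iii); once that catalog is in hand, the present statement transports without modification. Parts (ii) and (iii) are by contrast essentially one-line applications of Lemma \ref{l16}(i).
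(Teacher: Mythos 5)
Parts (i) and (ii) of your proposal are sound and coincide with what the paper does: for (i) the paper simply transports the case analysis of Claim \ref{c2}(iii) (the moves used there are judicious, hence in particular lex-decreasing, so they still contradict lex-minimality in the present setting), and for (ii) the two-endpoint application of Lemma \ref{l16}(i) is exactly the intended one-liner.

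Part (iii), however, has a genuine gap. First, Lemma \ref{l16}(i) cannot be ``applied to a vertex $u \in V_i$'' when $V_i$ is a $Z$-part: every $u \in Z_i$ has $d_{G[Z_i]}(u)=0$, and the lemma only constrains vertices of positive internal degree. Second, and more seriously, in the present situation \emph{no} single-vertex move can strictly lower the lexicographic order, so the move you promise does not exist. Indeed, every part spans at most one edge; moving $p$ from $A$ to $B$ replaces $e(A)$ by $e(A)-d_A(p)$ and $e(B)$ by $e(B)+|N(p)\cap B|$. If $d_A(p)=0$ nothing decreases, and if $d_A(p)=1$ (so $e(A)$ drops from $1$ to $0$) then Lemma \ref{l16}(i) forces $|N(p)\cap B|\ge 1$, so $e(B)$ rises by at least $1$ and the sorted tuple does not decrease. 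The correct argument is a compound move of the kind used in Claims \ref{c2}(iv) and \ref{c7}(iii): assuming $e(Z_{i},Z_{j})=0$, take an endpoint $w_1$ of the unique edge of some $Y_\ell$ (this requires $y\ge 1$, a point worth flagging; when $y=0$ the claim is not needed since then $e(V_1,\ldots,V_k)=m$), note $N_{Z_{i}}(w_1)\neq\emptyset$ by Lemma \ref{l16}(i), and simultaneously move $w_1$ into $Z_{i}$ and $N_{Z_{i}}(w_1)$ into $Z_{j}$. Then $e(Y_\ell)$ drops to $0$, the new $Z_{i}$ spans no edge (all $Z_{i}$-neighbours of $w_1$ have left), and the new $Z_{j}$ spans only edges of $E(N_{Z_{i}}(w_1),Z_{j})\subseteq E(Z_{i},Z_{j})=\emptyset$, so the lexicographic order strictly decreases, a contradiction.
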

By Claim \ref{c42}, we have
\begin{equation} \label{l2e8}
    m\ge 4\binom{y}{2}+2yz+\binom{z}{2}+y=2(y+\frac{1}{2}z)^2-(y+\frac{1}{2}z)
\end{equation}
and
\begin{equation} \label{l2e9}
    y+z=k.
\end{equation}
By \eqref{l2e8} and \eqref{l2e9}, we have
\begin{equation}
 y\le \frac{1}{2}+\frac{1}{2}\sqrt{8m+1}-k.   
\end{equation}
Let $u_2(m):=m-k\left(1/2+\sqrt{8m+1}/2-k\right)-\left((k-1)\sqrt{2m+{1}/{4}}\right)/2$. Since $u_2'(m)\geq 0$, we have $u_2(m)\ge u_2(3k^2/2)>-(k-1)/{4}-(k-2)^2/{8}$.
Thus, $m-ky-\left((k-1)\sqrt{2m+{1}/{4}}\right)/2\ge u_2(3k^2/2)>-(k-1)/{4}-(k-2)^2/{8}$. 
Hence, 
\begin{align}
    e(V_1,\ldots,V_k)=&m-y 
=\frac{k-1}{k}m+\frac{1}{k}(m-ky-\frac{k-1}{2}\sqrt{2m+\frac{1}{4}})+\frac{k-1}{2k}\sqrt{2m+\frac{1}{4}} \notag \\
>&\frac{k-1}{k}m+\frac{1}{k}(-\frac{k-1}{4}-\frac{(k-2)^2}{8})+\frac{k-1}{2k}\sqrt{2m+\frac{1}{4}}\notag \\
=&\frac{k-1}{k}m+\frac{k-1}{2k}h(m)-\frac{(k-2)^2}{8k}. \notag
\end{align}
Thus, $(V_1,\ldots,V_k)$ is a partition satisfying Theorem \ref{small edges}. This completes the proof. \qedhere
\end{case}
\end{proof}

\section{Some lemmas}
In this section we show some useful lemmas. These will be used in the next two sections to prove our main results. First, we need a lemma due to Xu and Yu~\cite{XY2009}.

\begin{lemma}[Xu and Yu~\cite{XY2009}] \label{l1}
Let ${m}'={(k-1)}^{2}m/{{k}^{2}}+q$. If $q\le (k-1)h(m)/(2k^2)$, then
$m'/(k-1)^2+(k-2)h(m')/(2(k-1)^2)\leq m/k^2+(k-1)h(m)/(2k^2)$.
\end{lemma}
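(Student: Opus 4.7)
The plan is to reduce the claimed inequality, by elementary algebra, back to the stated hypothesis on $q$. First I would exploit the fact that the factor $(k-1)^2/k^2$ in the definition of $m'$ is engineered exactly to cancel the leading terms on the two sides:
\[
\frac{m'}{(k-1)^2} - \frac{m}{k^2} \;=\; \frac{q}{(k-1)^2}.
\]
After this cancellation the claim collapses to
\[
\frac{q}{(k-1)^2} + \frac{(k-2)\,h(m')}{2(k-1)^2} \;\le\; \frac{(k-1)\,h(m)}{2k^2}.
\]
For $k=2$ the second term on the left vanishes, and what remains is precisely the hypothesis $q \le h(m)/8$, so I would dispose of this case at once and assume $k \ge 3$ henceforth.

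For $k\ge 3$, I would clear denominators by multiplying through by $2k^2(k-1)^2$ and use the hypothesis $2k^2 q \le (k-1)h(m)$ to absorb the $q$-contribution, reducing everything to the single inequality $k\,h(m') \le (k-1)\,h(m)$. Introducing $H(x) := \sqrt{2x + 1/4}$ so that $h(x) = H(x) - 1/2$, this rewrites as
\[
k\,H(m') \;\le\; (k-1)\,H(m) + \tfrac{1}{2},
\]
which can be squared because both sides are positive. I expect that expanding $k^2 H(m')^2 = 2(k-1)^2 m + 2k^2 q + k^2/4$ on the left, and the binomial on the right, and noting that the residual constants $(k-1)^2/4 + 1/4 - k^2/4$ collapse to $-(k-1)/2$, will reduce the inequality exactly back to $2k^2 q \le (k-1)h(m)$, i.e., to the hypothesis.

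The main obstacle will be bookkeeping the shift $-\tfrac{1}{2}$ between $h$ and $H$ through the squaring step, together with the $\tfrac{1}{4}$ buried inside $H$; working directly with $h(m')^2$ rather than $H(m')^2$ produces a much less tidy expression. A pleasant side-effect of the plan is that, for $k \ge 3$, the target inequality should turn out to be \emph{equivalent} to the hypothesis rather than merely implied by it, which would explain why no slack is lost when the lemma is later iterated.
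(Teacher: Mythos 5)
Your plan is correct, and I verified the algebra it relies on: the leading terms cancel as you say, the $k=2$ case is immediate from the hypothesis, and for $k\ge 3$ the target reduces (after using $2k^2q\le (k-1)h(m)$ once and dividing by $k(k-2)>0$) to $k\,h(m')\le (k-1)\,h(m)$, i.e.\ $kH(m')\le (k-1)H(m)+\tfrac12$; squaring and using $k^2H(m')^2=2(k-1)^2m+2k^2q+k^2/4$ together with $\frac{(k-1)^2+1-k^2}{4}=-\frac{k-1}{2}$ brings this back exactly to the hypothesis, so no slack is lost. Note, however, that the paper offers no proof of this lemma to compare against --- it is quoted verbatim from Xu and Yu \cite{XY2009} --- so your self-contained verification is a genuine addition rather than a variant of the authors' argument. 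The only point worth making explicit when writing it up is the justification for the squaring step ($kH(m')\ge 0$ and $(k-1)H(m)+\tfrac12>0$, so $A^2\le B^2$ with $A,B\ge 0$ gives $A\le B$), and the implicit standing assumption $m'\ge 0$ so that $h(m')$ is real (automatic in every application, where $m'$ is an edge count).
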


 For a partition $(V_{1},\ldots,V_{k})$ of $V (G)$, we say that $V_{i}$ satisfies \textit{property $Q$} if $\left| N(x)\cap \overline{{{V}_{i}}} \right|\geq (k-1)|N(x)\cap V_i|$ for every $x\in V_i$. Note that moving any vertex of $V_i$ out does not destroy property $Q$. Moreover, it is easy to show that $e(V_i,\overline{V_i})\geq (k-1)e(V_i)$ if $V_i$ satisfies property $Q$.
 
\begin{definition} \label{l3}
Let $(V_1,\ldots,V_k)$ be a $k$-partition of $V(G)$. Let $m:=e(G)$. We call $(V_1,\ldots,V_k)$ is \textit{good} if all of the following holds.
\begin{enumerate}
    \item [(i)] $e({{V}_{1}},\ldots,{{V}_{k}})\ge f_k(m)$.
    \item [(ii)] $e(V_1)\geq \ldots \geq e(V_k)$.
    \item [(iii)] $V_{1}$ satisfies property $Q$.
\end{enumerate}
\end{definition}

Indeed, such a partition exists. Let $({{V}_{1}},\ldots,{{V}_{k}})$ be a $k$-partition maximizing $e(V_1,\ldots,{{V}_{k}})$ and so $e(V_1,\ldots,V_k)\ge f_k(m)$. In addition, we may assume that $e(V_1)\geq \ldots \geq e(V_k)$. For every $i,j\in [k]$ and all $x \in V_{i}$, we have $|N(x)\cap V_{j}|\ge |N(x)\cap V_{i}|$, otherwise we could move $x$ from $V_{i}$ to $V_{j}$ to obtain a larger number of crossing edges. Thus $\left| N(x)\cap \overline{{{V}_{i}}} \right|\geq (k-1)|N(x)\cap V_i|$ for all $x \in V_{i}$. 

The following lemma describes the edge distributions in the new partition obtained by moving a vertex from a good partition.

\begin{lemma} \label{l4}
Let $m$, $d$ and $k\geq 3$ be integers and let $\beta_1\in \mathbb{R}$. Let $(V_{1},\ldots,V_{k})$ be a good partition such that $lex(V_1,\ldots,V_k)$ is minimum. Let $e(V_{1})=m/k^2+\beta_{1}$ with $\beta_{1}>(k-1)h(m)/(2k^2)$. Let $v\in V_1$ with degree $d:=d_{G[V_1]}(v)>0$ in $G[V_1]$
such that $2e(V_1)\geq d(d+1)$. Let $Y_{1}=V_{1}\setminus \{v\}$, $x=e({{Y}_{1}},\overline{{{Y}_{1}}})$ and $m'=e(\overline{{Y_1}})$. Suppose that $G[\overline{Y_{1}}]$ has a $(k-1)$-partition $(Y_{2},\ldots,Y_{k})$ such that
\begin{equation} \label{l4e1}
    \max\limits_{2\le i\le k} e(Y_i)  \le \frac{m'}{(k-1)^2}+\frac{k-2}{2(k-1)^2}h({m}').
\end{equation}
Then the following holds.
\begin{enumerate}
    \item[(i)] $Y_1$ satisfies property $Q$.
    \item[(ii)] $x=\sum\limits_{u\in {{Y}_{1}}}{\left| N(u)\cap \overline{{{V}_{1}}} \right|}+\left| N(v)\cap {{V}_{1}} \right|
\ge  (k-1)\sum\limits_{u\in {{Y}_{1}}}{\left| N(u)\cap {{V}_{1}}\right|}+d$ and \\
    $2(k-1)(\frac{m}{{{k}^{2}}}+\beta_{1})-(k-2)d\le x \le \frac{{{k}^{2}}-1}{{{k}^{2}}}m-\beta_{1} +d$.
    \item[(iii)] ${m}'\le \frac{{{(k-1)}^{2}}}{{{k}^{2}}}m-(2k-1)\beta_{1} +(k-1)d$.
    \item[(iv)] $\underset{2\le i\le k}{\mathop{\max }}\,\{e(Y_{i})\}  \le \frac{m}{{{k}^{2}}}+\frac{k-1}{2{{k}^{2}}}h(m)$.
\end{enumerate}
\end{lemma}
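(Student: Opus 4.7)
The plan is to handle the four conclusions sequentially. Parts (i)--(iii) will follow from direct double-counting combined with property $Q$ of $V_1$, whereas (iv) will invoke Lemma \ref{l1} after verifying its hypothesis via (iii).

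For (i), fix $u\in Y_1$. Using $\overline{Y_1}=\overline{V_1}\cup\{v\}$ and $Y_1=V_1\setminus\{v\}$, a direct computation gives
\[
|N(u)\cap \overline{Y_1}|-(k-1)|N(u)\cap Y_1|=\bigl[|N(u)\cap \overline{V_1}|-(k-1)|N(u)\cap V_1|\bigr]+k\,\mathbf{1}[uv\in E(G)]\geq 0,
\]
the final inequality being property $Q$ of $V_1$ at $u$. For (ii), partition the crossing edges of $(Y_1,\overline{Y_1})$ according to whether the other endpoint lies in $\overline{V_1}$ or equals $v$; the latter edges number $|N(v)\cap Y_1|=|N(v)\cap V_1|=d$. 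The lower bound then follows from property $Q$ of $V_1$ applied to each $u\in Y_1$ together with $\sum_{u\in Y_1}|N(u)\cap V_1|=2e(V_1)-d$, and the upper bound follows from $e(V_1,\overline{V_1})\leq m-e(V_1)$. For (iii), expand $m'=e(\overline{V_1})+|N(v)\cap\overline{V_1}|=m-e(V_1)-\sum_{u\in Y_1}|N(u)\cap\overline{V_1}|$ and apply property $Q$ on each $u\in Y_1$ as in (ii).

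For (iv), set $q:=m'-(k-1)^2 m/k^2$; by (iii), $q\leq -(2k-1)\beta_1+(k-1)d$. Establishing $q\leq (k-1)h(m)/(2k^2)$ will allow Lemma \ref{l1} together with \eqref{l4e1} to complete the argument. Write $\beta_1=(k-1)h(m)/(2k^2)+\gamma$ with $\gamma>0$; the target inequality reduces to $d\leq h(m)/k+(2k-1)\gamma/(k-1)$. Using the identity $h(m)(h(m)+1)=2m$, the hypothesis $d(d+1)\leq 2e(V_1)$ rewrites as
\[
d(d+1)\leq (h(m)/k)(h(m)/k+1)+2\gamma.
\]
If $d\leq h(m)/k$ the target is immediate; otherwise write $d=h(m)/k+\delta$ with $\delta>0$, so that expanding gives $\delta(2h(m)/k+\delta+1)\leq 2\gamma$, hence $\delta\leq 2\gamma$. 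Since $(2k-1)/(k-1)=2+1/(k-1)>2$ for $k\geq 3$, we conclude $\delta\leq 2\gamma<(2k-1)\gamma/(k-1)$, as required.

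The main obstacle is precisely this last step of (iv): one must convert the quadratic degree hypothesis $d(d+1)\leq 2e(V_1)$ into a linear bound on $d$ compatible with the slack $\gamma$ provided by the strict inequality $\beta_1>(k-1)h(m)/(2k^2)$, after which Lemma \ref{l1} applies mechanically. The remaining parts are routine double-counting combined with property $Q$.
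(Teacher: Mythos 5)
Your proposal is correct. Parts (i)--(iii) follow the paper's argument essentially verbatim: double counting across the cut together with property $Q$ of $V_1$, with the identities $\sum_{u\in Y_1}|N(u)\cap V_1|=2e(V_1)-d$ and $m'=m-e(Y_1)-x$ doing the work. The only genuine divergence is in (iv). The paper splits into two cases: if $d\le h(m)/(2k^2)+(2k-1)\beta_1/(k-1)$ it feeds (iii) into Lemma \ref{l1}; otherwise it uses $e(V_1)\ge d(d+1)/2$ and the lower bound $x\ge (k-1)d^2+d$ from (ii) to bound $m'\le m-\tfrac{2k-1}{2}d^2-\tfrac{d}{2}$ directly, then invokes $2m=h(m)^2+h(m)$. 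You instead show that the second case is vacuous: writing $\beta_1=(k-1)h(m)/(2k^2)+\gamma$, the exact identity $2e(V_1)=(h(m)/k)(h(m)/k+1)+2\gamma$ turns the quadratic hypothesis $d(d+1)\le 2e(V_1)$ into $\delta(2h(m)/k+\delta+1)\le 2\gamma$ for $\delta=d-h(m)/k>0$, whence $\delta\le 2\gamma<(2k-1)\gamma/(k-1)$, and the linear bound on $d$ needed to apply (iii) and Lemma \ref{l1} always holds. Both routes ultimately verify the same hypothesis $m'\le (k-1)^2m/k^2+(k-1)h(m)/(2k^2)$ of Lemma \ref{l1}; yours is a slightly cleaner single-case reduction, while the paper's version keeps the two mechanisms (linear bound on $d$ versus quadratic bound on $m'$) separate. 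I verified the key steps ($2h(m)/k+\delta+1\ge 1$ and $(2k-1)/(k-1)>2$ for $k\ge 3$) and they hold.
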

\begin{proof}
First we show (i). Since
$\left| N(u)\cap \overline{Y_{1}} \right|\ge \left| N(u)\cap \overline{V_{1}} \right| \ge (k-1)\left| N(u)\cap {{V}_{1}} \right|\geq (k-1)\left| N(u)\cap {{Y}_{1}} \right|$ for every $u\in Y_{1}$, we have $Y_1$ satisfies property $Q$. Now we prove (ii).
Note that $(V_1,\ldots,V_k)$ is good, we have 
\begin{align}
 x&=\sum_{u\in {{Y}_{1}}}{\left| N(u)\cap \overline{{{Y}_{1}}} \right|} 
 =\sum_{u\in {{Y}_{1}}}{\left| N(u)\cap \overline{{{V}_{1}}} \right|}+\left| N(v)\cap {{V}_{1}} \right| \notag \\
 & \ge   (k-1)\sum_{u\in {{Y}_{1}}}{\left| N(u)\cap {{V}_{1}}\right|}+d
=2(k-1)e({{V}_{1}})-(k-2)d \notag \\
&=2(k-1)(\frac{m}{k^2}+\beta_{1})-(k-2)d. \notag  
\end{align}
On the other hand, 
\begin{align}
 x\le m-e({{Y}_{1}})=m-e(V_{1})+d=\frac{k^2-1}{k^2}m-\beta_{1} +d. \notag  
\end{align}
 So we have (ii). 
By (ii), we have ${m}'=m-e({{Y}_{1}})-x 
\le m-(m/k^2+\beta_{1}-d)-2(k-1)(m/k^2+\beta_{1})+(k-2)d= (k-1)^2m/k^2-(2k-1)\beta_{1} +(k-1)d$. Thus, (iii) holds. 

To prove (iv), by Lemma \ref{l1}, we only need to prove
\[
m'\le \frac{(k-1)^2}{k^2}m+\frac{k-1}{2k^2}h(m).
\]
Suppose $d \le h(m)/(2k^2)+(2k-1)\beta_{1}/(k-1) $. By (iii), ${m}'\le (k-1)^2m/k^2+(k-1)h(m)/(2k^2)$. Now suppose that $d >h(m)/(2k^2)+(2k-1)\beta_{1}/(k-1)$.
So $d>h(m)/k$ as $\beta_{1}>(k-1)h(m)/(2k^2)$. Let $r_1(y)=-(2k-1)y^2/{2}-y/2$. Since $r_1'(y)<0$ when $y\geq 0$, we have $r_1(y)<r_1(h(m)/k)=-(2k-1)h^2(m)/(2k^2)-{h(m)}/(2k)$ when $y>h(m)/k$.
Recall that $e({{V}_{1}})\ge ({d^{2}}+d)/2$ and thus $x\ge d +(k-1){d^{2}}$ by (ii). Then 
\begin{align}
m'&=m-(e(V_1)-d)-x \le m-\frac{{d^{2}}+d}{2} -(k-1){d^{2}} = m-\frac{(2k-1)}{2}d^{2}-\frac{d}{2} \notag \\
&<m-\frac{2k-1}{2k^2}h^2(m)-\frac{h(m)}{2k}=\frac{(k-1)^2}{k^2}m+\frac{k-1}{2k^2}h(m).\notag
\end{align}
The last equality holds since $2m=h^2(m)+h(m)$. This proves (iv).
\end{proof}

We also need the following technical lemma.

\begin{lemma} \label{l5}
Suppose \( m \geq 18 \) and \( \beta_1 > h(m)/9 \). Let \( d \geq 1 \) be an integer and \( c \geq 0 \) be a real number. Let the function \( \psi : \mathbb{R}^2 \to \mathbb{C} \) be
\[
\psi(x, c) := \frac{1}{4} \sqrt{\frac{16}{9}m - 2\beta_1 + 2d - 2x+ \frac{1}{4}}  + \frac{1}{2}x - \frac{2}{9}m - \frac{1}{2}(\beta_1 - d) - \frac{1}{3}\sqrt{2m + \frac{1}{4}} + \frac{1}{24} + c.
\]
Then the following holds.
\begin{enumerate}
    \item[(i)] \(\psi(\cdot, c)\) is concave for any fixed \( c \geq 0 \).
    
    \item[(ii)] \(\psi\left(8m/9 - \beta_1 + d, c\right) \geq 0\).
    
    \item[(iii)] If \(\beta_1 \geq h(m)/9 + 1/2\), then \(\psi\left(4m/9 + 4\beta_1 - d, c\right) \geq 0\).
    
    \item[(iv)] \(\psi\left(4m/9 + 4\beta_1 - d + 1, c\right) \geq 0\).
    
    \item[(v)] For \( c \geq 1/4 \), \(\psi\left(4m/9 + 4\beta_1 - d, c\right) \geq 0\).
\end{enumerate}
\end{lemma}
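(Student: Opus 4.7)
The function $\psi(\cdot,c)$ is the sum of an affine function of $x$ and the single square-root term $\frac{1}{4}\sqrt{A - 2x + 1/4}$, where $A := \frac{16m}{9} - 2\beta_1 + 2d$ is independent of $x$. For part (i), I differentiate in $x$ twice: the affine terms vanish, and the square root contributes $-\frac{1}{4}(A - 2x + \frac{1}{4})^{-3/2} < 0$ wherever the radicand is positive. Hence $\psi(\cdot, c)$ is concave on its real domain.

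For part (ii), I substitute $x_0 := 8m/9 - \beta_1 + d$: by design the radicand collapses to $1/4$, so the square-root term equals $\frac{1}{8}$. Collecting the remaining pieces and using $\sqrt{2m+1/4} = h(m) + \frac{1}{2}$, the stray constants $\frac{1}{8} - \frac{1}{6} + \frac{1}{24}$ cancel to zero, leaving the clean expression
\[
\psi(x_0, c) \;=\; \frac{2m}{9} - \beta_1 + d - \frac{h(m)}{3} + c,
\]
which is non-negative under the bounds on $\beta_1$ relevant to the lemma's application.

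For parts (iii)--(v), the substitution $x_1 := 4m/9 + 4\beta_1 - d$ (or $x_1 + 1$ in (iv)) produces the non-trivial radicand $T := \frac{8m}{9} - 10\beta_1 + 4d + \frac{1}{4}$ (resp.\ $T - 2$ in (iv)). Introducing $s := \beta_1 - h(m)/9 > 0$ and $A_0 := \frac{2h(m)}{3} - \frac{1}{2}$, the identity $2m = h(m)^2 + h(m)$ gives the crucial factorization $T = A_0^2 - (10s - 4d)$ (and similarly for (iv)). I then split according to the sign of $10s - 4d$: when non-positive, $\sqrt{T} \geq A_0$ and substitution reduces the claim to a sum of explicitly non-negative terms plus $c - \frac{1}{4}$ (or $c + \frac{1}{4}$ in (iv)); when positive, I apply $\sqrt{A_0^2 - b} \geq A_0 - b/A_0$ (which follows from $\sqrt{1-t} \geq 1 - t$ on $[0,1]$) to linearize, producing a bound whose coefficient of $s$ is $\frac{3}{2} - \frac{5}{2A_0}$, positive since $m \geq 18$ forces $A_0 > 3$. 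The main obstacle is the bookkeeping here: parts (iii)--(v) differ only by small constants -- the $+\frac{1}{2}$ in the hypothesis of (iii), the $+1$ shift in (iv), and the $c \geq \frac{1}{4}$ of (v) -- each just sufficient to absorb a residual $-\frac{1}{4}$ in the estimate, so I would verify each part's closure separately while reusing the same underlying factorization.
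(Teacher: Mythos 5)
Your part (i) is the paper's argument verbatim, and your computation for part (ii) is correct as far as it goes: $\psi(8m/9-\beta_1+d,c)=\tfrac{2m}{9}-\beta_1+d-\tfrac{1}{3}h(m)+c$. But the final step of (ii) is a genuine gap. This quantity is nonnegative only if $\beta_1$ is bounded \emph{above} by roughly $\tfrac{2m}{9}+d-\tfrac{1}{3}h(m)$, and the lemma's stated hypotheses supply only the \emph{lower} bound $\beta_1>h(m)/9$; as a bare statement about reals, (ii) fails for large $\beta_1$. Appealing to ``the bounds on $\beta_1$ relevant to the lemma's application'' without identifying them does not close the argument. The paper closes it by importing Lemma~\ref{l4}(iii): if $\psi<0$ then $\beta_1>\tfrac{2}{9}m+d-\tfrac{1}{3}\sqrt{2m+1/4}+\tfrac{1}{6}$, whence $m'\le \tfrac{4}{9}m-5\beta_1+2d<-\tfrac{2}{3}m+\tfrac{5}{3}\sqrt{2m+1/4}-3d-\tfrac{5}{6}<0$ for $m\ge 18$, contradicting $m'\ge 0$. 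You need to make this (or an equivalent upper bound on $\beta_1$, e.g.\ the one forced by the nonnegativity of the radicand in parts (iii)--(v), $\beta_1\le \tfrac{4}{45}m+\tfrac{2}{5}d+\tfrac{1}{40}$) explicit; otherwise (ii) is unproven.

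For (iii)--(v) your route genuinely differs from the paper's and is sound. The paper views $\psi(a,c)$ as a concave function $g(\beta_1)$ on $[\theta,\gamma]$ with $\gamma=\tfrac{4}{45}m+\tfrac{2}{5}d+\tfrac{1}{40}$ and checks the two endpoints numerically for each part. You instead use $2m=h(m)^2+h(m)$ to write the radicand as $A_0^2-(10s-4d)$ with $A_0=\tfrac{2h(m)}{3}-\tfrac{1}{2}$ and $s=\beta_1-h(m)/9$, then linearize via $\sqrt{A_0^2-b}\ge A_0-b/A_0$; I checked the factorization, the cancellation to $\tfrac{3}{2}s+c-\tfrac{1}{4}$ in the easy case, and the coefficient $\tfrac{3}{2}-\tfrac{5}{2A_0}>0$ (indeed $A_0>3$ for $m\ge 18$), and each of (iii), (iv), (v) does close with margin as you indicate. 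This is arguably cleaner and more transparent than the paper's endpoint checks, since it makes visible exactly which extra $+\tfrac14$ (from $s\ge\tfrac12$, from the $+1$ shift, or from $c\ge\tfrac14$) rescues each part. The remaining work is the bookkeeping you defer; it is routine, but you should write it out, since the entire content of the lemma is that these constants do not quite fail.
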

We defer the proof to Appendix \ref{appendix B}.

\section{Stability of judicious 2-partitions}
In this section, we study the stability of judicious $2$-partitions and present proofs of Theorem \ref{stability} and Theorem \ref{sta1}. We first prove Theorem \ref{sta1}, which covers the case of Theorem \ref{stability} when $G$ is not a complete graph of odd order. In fact, Theorem \ref{sta1} establishes a stronger judicious partition result if the maximum cut of a graph is larger than that of complete graphs. Here we sketch the proof of Theorem \ref{sta1}. We start with a good partition $(V_1,V_2)$ with minimum $lex(V_1,V_2)$. We show that if $e(V_1)$ is small then we are done. Otherwise, by moving a vertex with small degree from $V_1$ to $V_2$, we obtain a new partition $(W_1,W_2)$. We prove that $(W_1,W_2)$ satisfies the condition of the theorem.

\begin{theorem} \label{sta1}
If $f_2(G)\ge m/2+h(m)/4+1/4$, then $G$ admits a $2$-partition \(  (V_1 , V_2) \) such that 
\begin{enumerate}
    \item [\textup{(i)}]\label{t1} $\max\limits_{1\leq i\leq 2}e(V_i) \leq  \frac{m}{4}+\frac{h(m)}{8}$, and
    \item [\textup{(ii)}]\label{t2} $e(V_1, V_2) \geq \frac{m}{2}+\frac{h(m)}{4}+\frac{1}{4}.$
\end{enumerate}
\end{theorem}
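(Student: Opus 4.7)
My plan is to follow the spirit of the sketched strategy: take $(V_1, V_2)$ a good partition in the sense of Definition \ref{l3} with $e(V_1) \ge e(V_2)$ and $lex(V_1, V_2)$ minimum. Since the partition is a maximum cut, $e(V_1, V_2) \ge f_2(G) \ge m/2 + h(m)/4 + 1/4$ by hypothesis and (ii) is immediate. My intention is to establish (i) for $(V_1, V_2)$ itself by contradiction, under the standing assumption $e(V_1) = m/4 + \beta_1$ with $\beta_1 > h(m)/8$.

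A first step combines property $Q$ on $V_1$ (giving $e(V_1, V_2) \ge 2 e(V_1)$) with $e(V_1) + e(V_2) = m - e(V_1, V_2) \le m/2 - h(m)/4 - 1/4$ to produce $\beta_1 \le h(m)/8 + 1/8$. Set $s := e(V_1, V_2) - (m/2 + h(m)/4 + 1/4) \ge 0$ and decompose $V_1 = V_1^{\ne 0} \cup I$ (non-isolated/isolated in $G[V_1]$) and $V_1^{\ne 0} = B \cup N$, where $B$ is the set of \emph{balanced} non-isolated vertices (those with $d_{G[V_2]}(v) = d_{G[V_1]}(v)$) and $N$ the non-balanced ones. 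Summing $d_{G[V_2]}(v) - d_{G[V_1]}(v) \ge 0$ over $v \in V_1$ gives $|N| + \sum_{u \in I} d_{G[V_2]}(u) \le t := e(V_1, V_2) - 2 e(V_1) = s + h(m)/4 + 1/4 - 2\beta_1$. If no balanced non-isolated vertex existed, the same summation would give $e(V_1, V_2) \ge 2 e(V_1) + |V_1^{\ne 0}| \ge 2 e(V_1) + 2$, which combined with the upper bound on $e(V_1) + e(V_2)$ yields $e(V_1) \le m/4 + h(m)/8 - 7/8$, a contradiction.

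The minimum-lex condition then forces every balanced $v$ with $d := d_{G[V_1]}(v) \ge 1$ to satisfy $d \ge D := e(V_1) - e(V_2) = 2\beta_1 + h(m)/4 + 1/4 + s$: moving $v$ preserves the cut, producing a good partition whose sorted lex $(e(V_2) + d, e(V_1) - d)$ can be at least $(e(V_1), e(V_2))$ only if $d \ge D$. In particular, $|V_1^{\ne 0}| \ge D + 1$. From $t \ge |N|$ and $\beta_1 > h(m)/8$ we get $s > |N| - 1/4$, hence $D > h(m)/2 + |N|$; this also excludes $|N| > D$. Combining $|B| D + |N| \le \sum_{v \in V_1^{\ne 0}} d_{G[V_1]}(v) = 2 e(V_1) \le m/2 + h(m)/4 + 1/4$ with $|B| \ge D + 1 - |N|$ gives $D(D + 1 - |N|) + |N| \le m/2 + h(m)/4 + 1/4$. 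Using $D > h(m)/2 + |N|$ and $D + 1 - |N| > h(m)/2 + 1$, we lower-bound $D(D + 1 - |N|) > (h(m)/2 + |N|)(h(m)/2 + 1) = m/2 + h(m)/4 + |N|(h(m)/2 + 1)$ after applying the identity $h(m)^2 + h(m) = 2m$; substituting into the previous inequality yields $|N|(h(m)/2 + 2) < 1/4$, impossible for $|N| \ge 1$. The residual case $|N| = 0$ is handled analogously: $|B| \ge D + 1$ and $|B| D \le m/2 + h(m)/4 + 1/4$ give $(D + 1) D > m/2 + h(m)/2 + 5/16$, which exceeds the upper bound $m/2 + h(m)/4 + 1/4$. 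Hence $\beta_1 \le h(m)/8$ and (i) holds. The hardest technical step is the case-uniform algebraic manipulation invoking $h(m)^2 + h(m) = 2m$ to close the chain of inequalities for every admissible value of $|N|$.
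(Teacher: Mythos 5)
Your overall strategy---proving that the min-lex maximum-cut partition itself satisfies (i)---cannot work, because that partition can genuinely violate (i). Take $G$ to be the complete split graph obtained from $K_5$ by joining every vertex to an independent set of $5$ new vertices. Here $m=35$, $h(m)=\sqrt{70.25}-1/2\approx 7.88$, and the hypothesis holds since $f_2(G)=25\ge m/2+h(m)/4+1/4\approx 19.72$. But the maximum cut is attained \emph{uniquely} by the split $(K_5,\overline{K_5})$, which has $e(V_1)=10>m/4+h(m)/8\approx 9.73$; minimizing the lexicographic order among maximum cuts changes nothing. (The theorem is still true for this $G$: the partition with four clique vertices on one side has cut $24\ge 19.72$ and $\max e(V_i)=6$, but its cut is strictly smaller than $f_2(G)$.) This example pinpoints exactly where your write-up breaks. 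First, your ``first step'' conclusion $\beta_1\le h(m)/8+1/8$ does not follow from $e(V_1,V_2)\ge 2e(V_1)$ together with $e(V_1)+e(V_2)\le m/2-h(m)/4-1/4$: those only bound the \emph{sum} $e(V_1)+e(V_2)$, not $e(V_1)$ itself (you would need a lower bound on $e(V_2)$, which is unavailable---in the example $e(V_2)=0$ and $\beta_1=1.25>h(m)/8+1/8$). Second, your dismissal of the case $B=\emptyset$ is wrong for the same reason: $e(V_1,V_2)\ge 2e(V_1)+|V_1^{\neq 0}|$ combined with the bound on $e(V_1)+e(V_2)$ does not yield $e(V_1)\le m/4+h(m)/8-7/8$, and in the example every vertex of $V_1$ is unbalanced ($d_{V_2}=5>4=d_{V_1}$) while $e(V_1)=10$. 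Since the final counting chain explicitly invokes $2e(V_1)\le m/2+h(m)/4+1/4$, the whole argument collapses; your $D$-based degree count is a nice idea and does close the case $B\neq\emptyset$ with a little more care, but it cannot rescue the case $B=\emptyset$, which is realizable.

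The missing idea, and the heart of the paper's proof, is that one must be willing to \emph{decrease} the cut below $f_2(G)$ while keeping it above the threshold $m/2+h(m)/4+1/4$: the paper removes a minimum-nonzero-degree vertex $v_0$ from $V_1$ (and, in a further subcase, a second-minimum-degree vertex $v_1$), shows the new cut still satisfies (ii) because $e(W_1,W_2)\ge 2e(V_1)=m/2+2\alpha$, and then uses the bound $d_0\le\sqrt{m/2+2\alpha+1/4}-1/2$ to control $e(W_2)$. Your proof never produces a modified partition, so it has no mechanism to handle graphs (like the split graph above) whose maximum cuts are all internally unbalanced.
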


\begin{proof}
Assume $G$ has no isolated vertex. Let \( V(G) = V_1 \cup V_2 \) be a good $2$-partition with minimum $lex(V_1,V_2)$. If \( e(V_1) \) satisfies (i), then we are done. Otherwise, suppose
\begin{equation} \label{t4}
e(V_1) = \frac{m}{4} + \alpha \quad \text{with} \quad \alpha > \frac{1}{8}h(m).   
\end{equation}
Since $V_1$ satisfies property $Q$, we have
$e(V_1, V_2) \geq m/2 + 2\alpha$,
and hence
$e(V_2) = m - e(V_1) - e(V_1, V_2) \leq m/4 - 3\alpha.$

Let $v_{0},v_{1}\in V_{1}$ such that $d_{G[V_{1}]}(v_{0})=d_{0}$ and $d_{G[V_{1}]}(v_{1})=d_{1}$, where $d_{0}$ and $d_{1}$ are the minimum and the second-minimum nonzero degrees in $G[V_{1}]$, respectively. Define a new $2$-partition $(W_1,W_2)$ such that \( W_1 = V_1 \setminus \{v_0\} \), \( W_2 = V_2 \cup \{v_0\} \). Then \( W_1 \) satisfies property $Q$ and \( e(W_1) < e(V_1) \). If $e(W_2)\leq e(W_1)$ and \((W_1, W_2)\) satisfies (ii), then \((W_1, W_2)\) is a good 2-partition with smaller $lex(W_1,W_2)$, a contradiction to our choice of $(V_1,V_2)$. So either \( e(W_2) > e(W_1) \) or \( (W_1, W_2) \) violates (ii).
Since \( e(V_{1}) \geq \binom{d_0+1}{2} \), it follows that
\begin{equation} \label{t7}
\frac{m}{4} + \alpha \geq \binom{d_0+1}{2}, \quad \text{which implies} \quad  d_0 \leq \sqrt{\frac{m}{2} + 2\alpha + \frac{1}{4}} - \frac{1}{2}.
\end{equation}
Assume $\alpha \geq \frac{h(m)+1}{8}$. Then 
\begin{align}
e(W_1, W_2) &= \sum_{x \in W_1} |N(x) \cap W_2| = \sum_{x \in W_1} |N(x) \cap V_2| + d_0  \geq \sum_{x \in W_1} |N(x) \cap V_1|  + d_0\notag \\ &= 2e(V_1) - d_0+ d_0 
= \frac{m}{2} + 2\alpha>  \frac{m}{2} + \frac{1}{4}h(m) + \frac{1}{4}. \notag
\end{align}
Thus, by \eqref{t7}, we have
\begin{align}
e(W_2) &= m - e(W_1) - e(W_1, W_2) \notag \\
  & \leq m - \left(\frac{m}{4} + \alpha - d_0\right) - \left(\frac{m}{2} + 2\alpha\right) \notag \\
  &= \frac{m}{4} - 3\alpha + d_0  \notag \\
  &\leq \frac{m}{4}+\sqrt{\frac{m}{2} + 2\alpha + \frac{1}{4}} - \frac{1}{2} - 3\alpha. \notag 
\end{align}
If $(W_1,W_2)$ does not satisfy (i), then
\begin{equation} \label{t8}
\min\{\alpha,\sqrt{\frac{m}{2} + 2\alpha + \frac{1}{4}} - \frac{1}{2} - 3\alpha\}> \frac{h(m)}{8}.     
\end{equation}
The left hand side is maximized when $\alpha=h(m)/8$, which contradicts \eqref{t8}. Thus $(W_1,W_2)$ satisfies (i). Thus, we may assume
\begin{equation} \label{t9}
    \frac{h(m)}{8} < \alpha < \frac{h(m)+1}{8}.
\end{equation}
Suppose there exists a vertex $v\in W_{1}$ such that $|N(v) \cap V_2| \geq |N(v) \cap V_1|+1$. Then
\begin{align}
e(W_1, W_2) &  \geq \sum_{x \in W_1} |N(x) \cap V_1| + d_0+1= \frac{m}{2} + 2\alpha + 1 >  \frac{m}{2} + \frac{1}{4}h(m) + \frac{1}{4}. \notag
\end{align}
By the same argument, we have $(W_1, W_2)$ satisfies (i).

Hence, we may assume
\[
|N(x) \cap V_2| = |N(x) \cap V_1| \quad \text{for all } x \in W_1.
\]
If $d_1<4\alpha$, then by \eqref{t9}, \( e(G[V_2 \cup \{v_1\}])< m/4-3\alpha + d_1 < m/4+\alpha=e(V_1)<m/4+h(m)/8+1/8 \). Thus $e(G[V_2 \cup \{v_1\}])<m/4+h(m)/8$ since $e(G[V_2 \cup \{v_1\}])\in \mathbb{N}$. Note that $e(G[V_1\setminus\{v_1\}])<m/4+h(m)/8+1/8-1<m/4+h(m)/8$ and $e(V_1 \setminus \{v_1\}, V_2 \cup \{v_1\})=e(V_1,V_2)\geq m/2+h(m)/4+1/4$.
Thus, \( (V_1 \setminus \{v_1\}, V_2 \cup \{v_1\}) \) is a desired partition.

Now suppose
\[
 d_1 \geq 4\alpha > \frac{h(m)}{2}.
\]
As \( 2e(V_1) > (|V_1| - 1)d_1 \), we have
\[
|V_1| - 1 - d_1 < \frac{2e(V_1)}{d_1} - d_1 < \frac{\frac{m}{2} + \frac{1}{4}h(m) + \frac{1}{4}}{h(m)/2} - \frac{h(m)}{2} = \frac{1}{2h(m)} + 1 < 2,
\]
which implies \( d_1 = |V_1| - 1 \) or \( d_1 = |V_1| - 2 \) since $d_1 \in \mathbb{N}$. If \( d_1 = |V_1| - 1 \), then \( G[V_1] \) is complete, and
\[
d_0 = d_1 \geq 4\alpha > \sqrt{\frac{m}{2} + 2\alpha + \frac{1}{4}} - \frac{1}{2},
\]
a contradiction to \eqref{t7}.
Thus,
 \( d_1 = |V_1| - 2 \).   
We claim that $v_0$ is adjacent to $v_1$. Otherwise, we have 
$m/2 + 2\alpha = 2e(V_1) \geq d_1(d_1 + 1)$, which implies
$ d_1 \leq \sqrt{m/2 + 2\alpha + 1/4} - 1/2< 4\alpha$.
Thus, $v_0$ is adjacent to every vertex of degree $|V_1|-2$ in $G[V_1]$. Thus $d_0=|V_1|-1$, again a contradiction to \eqref{t7}. This completes the proof of the theorem.
\end{proof}

\begin{proof}[Proof of Theorem \ref{stability}]
Assume $G \neq K_n$, where $n$ is odd.
By Lemma \ref{k-cut} (ii), we have $f_2(G) \ge m/2+h(m)/4+1/4$. This completes our proof by Theorem \ref{sta1}.
\end{proof}

\section{Main result: Tight bounds for judicious 3-partitions}
In this section, we prove tight bounds for judicious 3-partitions, i.e. Theorem \ref{main result}. When $m<18$, it follows immediately by Theorem \ref{small edges}. Now suppose $m \geq 18$. The proof is similar to that of Theorem \ref{sta1}. We start with a good $3$-partition $(V_1,V_2,V_3)$. We then modify the partition by moving a vertex from $V_1$ to form $W_1$, and apply Theorem~\ref{stability} to $\overline{W_1}$ to obtain a 2-partition $(W_2,W_3)$. We show that if $(V_1,V_2,V_3)$ does not satisfy Theorem \ref{main result}, then $(W_1,W_2,W_3)$ is a desired partition. 
\begin{proof}[Proof of Theorem \ref{main result}]
Assume that $G$ has no isolated vertex. Let $({{V}_{1}},V_2,{{V}_{3}})$ be a good $3$-partition of $V(G)$ with minimum $lex({{V}_{1}},V_2,{{V}_{3}})$. Let $e(V_{i})={m}/{k^2}+\beta_{i}$ for $i \in [3]$. Thus,
			\begin{equation} \label{t3e3}
				\beta_{1}>\frac{1}{9}h(m),
			\end{equation}
for otherwise, $({{V}_{1}},V_2,{{V}_{3}})$ is a desired partition since $e(V_1,V_2,V_3)\geq 2m/3+h(m)/3$. Let $v_{0}\in V_{1}$ such that $d_0:=d_{G[V_{1}]}(v_{0})=\min\{d_{G[V_1]}(v):d_{G[V_1]}(v)>0,v\in V_1\}$. Let $W_{1}=V_{1}\setminus v_{0}$, $x=e({{W}_{1}},\overline{W_1})$ and $m'=e(\overline{{{W}_{1}}})$. Applying Theorem \ref{stability} on $\overline{W_1}$, we have $\overline{{{W}_{1}}}$ admits a $2$-partition $({{W}_{2}},{{W}_{3}})$ satisfying both
			\begin{equation} \label{t3e4}
				e({{W}_{2}},W_3)\ge \frac{1}{2}{m}'+\frac{1}{4}h({m}')+c(\overline{W_1})
			\end{equation}
			and 
			\begin{equation} \label{t3e5}
				\mathop{\max}\limits_{2\le i \le 3}\{e({{W}_{i}})\}\le \frac{{{m}'}}{4}+\frac{1}{8}h({m}'),
			\end{equation}
where $c(\overline{W_1}) = 1/4$ if $\overline{W_1}$ is not a complete graph of odd order and 0 otherwise.
\medskip \begin{claim} \label{c7.2}
If $({{W}_{1}},W_2,{{W}_{3}})$ satisfies (ii), then $({{W}_{1}},W_2,{{W}_{3}})$ satisfies (i).
    
For otherwise, suppose that $\max_{1\leq i\leq 3} \{e({{W}_{i}})\}>{m}/{9}+h(m)/9$. By Lemma \ref{l4} (iv), we have $e({{W}_{1}})>{m}/{9}+h(m)/9\ge \max_{2\leq i\leq 3}\{e({{W}_{i}})\}$. Note that $W_1$ satisfies property $Q$ and $e(W_1)<e(V_1)$.
 Hence, $(W_{1},W_2,W_{3})$ is good with $lex(W_1,W_2,W_3)<lex(V_1,V_2,V_3)$, a contradiction. Thus, $\max_{1\leq i\leq 3}\{e({{W}_{i}})\}\le {m}/{9}+h(m)/9$.    \hfill $\blacksquare$
\end{claim}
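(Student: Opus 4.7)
The plan is to argue by contradiction. Suppose $(W_1, W_2, W_3)$ satisfies (ii) but the conclusion (i) fails, i.e.\ $\max_{1 \le i \le 3} e(W_i) > m/9 + h(m)/9$. I would first pin down which part is the offender. The partition $(W_1, W_2, W_3)$ is exactly the output of the operation considered in Lemma~\ref{l4} with $k = 3$: we removed from $V_1$ a vertex $v_0$ of minimum positive degree in $G[V_1]$, and Theorem~\ref{stability} applied to $\overline{W_1}$ produced a $2$-partition $(W_2, W_3)$ of $G[\overline{W_1}]$ satisfying \eqref{t3e5}, which matches the bound \eqref{l4e1}. The two non-trivial hypotheses of Lemma~\ref{l4} are in force: $\beta_1 > h(m)/9 = (k-1)h(m)/(2k^2)$ by \eqref{t3e3}, and $2 e(V_1) \ge d_0(d_0+1)$ because $v_0$ and its $d_0$ neighbors together contribute at least $d_0+1$ vertices of $V_1$ with degree $\ge d_0$ in $G[V_1]$. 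Hence Lemma~\ref{l4}(iv) yields $\max_{2 \le i \le 3} e(W_i) \le m/9 + h(m)/9$, so the violator of (i) must be $W_1$, and in fact $e(W_1) > m/9 + h(m)/9 \ge e(W_2), e(W_3)$.

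The next step is to verify that $(W_1, W_2, W_3)$, after possibly relabeling $W_2$ and $W_3$, is itself a good $3$-partition in the sense of Definition~\ref{l3}. Condition~(i) of goodness (enough crossing edges) follows from the assumed (ii) combined with Theorem~\ref{maxcut k=3}, since $e(W_1, W_2, W_3) \ge 2m/3 + h(m)/3$ meets the bound guaranteed for $f_3(m)$. Condition~(ii) of goodness, namely $e(W_1) \ge e(W_2) \ge e(W_3)$, is immediate from the strict domination of $e(W_1)$ established in the previous paragraph. Condition~(iii) of goodness, property $Q$ for $W_1$, is exactly Lemma~\ref{l4}(i).

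Finally, since $e(W_1) = e(V_1) - d_0 < e(V_1)$ and $e(W_1)$ is strictly the largest coordinate of $(W_1, W_2, W_3)$, the first coordinates of the lex tuples already decide the comparison, and we obtain $lex(W_1, W_2, W_3) < lex(V_1, V_2, V_3)$. This contradicts the minimality of $lex(V_1, V_2, V_3)$ among good $3$-partitions of $V(G)$ and completes the proof. The whole argument is really a short deduction once Lemma~\ref{l4} is in hand; the only fiddly detail is verifying the degree-sum hypothesis $2e(V_1) \ge d_0(d_0+1)$, and this is immediate from the choice of $v_0$ as a vertex of minimum positive degree.
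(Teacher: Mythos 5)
Your argument is correct and follows the paper's own proof essentially verbatim: Lemma~\ref{l4}(iv) forces the offending part to be $W_1$, and then property $Q$ together with $e(W_1)<e(V_1)$ makes $(W_1,W_2,W_3)$ a good partition of strictly smaller lexicographic order, contradicting the choice of $(V_1,V_2,V_3)$. The extra verifications you supply (the hypothesis $2e(V_1)\geq d_0(d_0+1)$ via the minimum-positive-degree choice of $v_0$, and the three conditions of Definition~\ref{l3}) are exactly the details the paper leaves implicit.
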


\medskip Now
$e({{W}_{1}},W_2,{{W}_{3}})=e({{W}_{1}},{{{\overline{W}}}_{1}})
+e({{W}_{2}},{{W}_{3}}) 
				\ge x+m'/2+h({m}')/4+c(\overline{W_1})$ by \eqref{t3e4}.
We will show that $x+m'/2+h({m}')/4+c(\overline{W_1}) \geq 2m/3+h(m)/3$. Let 
			\begin{equation} \label{t3e6}
				g(x)=x+\frac{1}{2}m'+\frac{1}{4}h({m}')+c(\overline{W_1})-\frac{2}{3}m-\frac{1}{3}h(m).
			\end{equation}
Since $m'=m-e(W_{1})-x$ and $e(W_{1})=m/9+\beta_{1}-d_{0}$, we obtain
			\begin{equation}   \label{f(x)}
				g(x) =\frac{1}{2}x-\frac{2}{9}m-\frac{1}{2}(\beta_{1}-d_{0})-\frac{1}{3}\sqrt{2m+\frac{1}{4}} +\frac{1}{4}\sqrt{2\left(\frac{8}{9}m-\beta_{1} +d_{0}-x\right)+\frac{1}{4}}+\frac{1}{24}+c(\overline{W_1}). 
			\end{equation}
            
To prove $(W_1,W_2,W_3)$ satisfies (ii), it is equivalent to prove $g(x)\geq 0$. 
By Lemma \ref{l4} (ii), we have $a\leq x\leq b$, where $a:=4m/9+4\beta_{1}-d_{0}$ and $b:=8m/9-\beta_{1} +d_{0}$. By Lemma \ref{l5} (i), $g(x)$ is concave. So it suffices to prove $g(a)\geq 0$ and $g(b) \geq 0$. Note that Lemma \ref{l5} (ii) yields that $g(b)\ge 0$. First, suppose $G[\overline{W_1}]$ is not a complete graph of odd order and thus $c(\overline{W_1})=1/4$ by Theorem \ref{stability}. By Lemma \ref{l5} (v), we have $g(a)\ge 0$. Hence, $(W_1,W_2,W_3)$ satisfies Theorem \ref{main result}.

So we may assume that $G[\overline{W_1}]$ is a complete graph of odd order. In this case, $\overline{V_1}$ is a complete graph of even order. Since $lex(V_1,V_2,V_3)$ is minimum, we have $|V_2|=|V_3|$ and thus $e(V_2)=e(V_3)$.
If $\sum_{u\in {{W}_{1}}}{\left| N(u)\cap \overline{{{V}_{1}}} \right|}
\ge 2\sum_{u\in {{W}_{1}}}{\left| N(u)\cap {{V}_{1}}\right|}+1$, then we have $x\geq a+1$ by Lemma \ref{l4} (ii). By Lemma \ref{l5} (iv), we have $g(a+1)\geq 0$. Thus, $(W_1,W_2,W_3)$ satisfies both (i) and (ii). Now we may assume
\begin{equation} \label{eqnei}
 \sum\limits_{u\in {{W}_{1}}}{\left| N(u)\cap \overline{{{V}_{1}}} \right|}
=  2\sum\limits_{u\in {{W}_{1}}}{\left| N(u)\cap {{V}_{1}}\right|}.   
\end{equation}
By Lemma \ref{l5} (iii), $g(a)\ge 0$ when $\beta_1 \ge h(m)/9+1/2$. Thus, we may assume
\begin{equation} \label{alpha}
\beta_1 <\frac{1}{9}h(m)+\frac{1}{2}.
\end{equation}

By (\ref{eqnei}) and property $Q$, 
we have 
\[
 2\left| N(u)\cap {{V}_{1}} \right| = \left| N(u)\cap \overline{V_{1}} \right|\quad \text{for every $u \in {W_1}$}.
\]
Let $v_1\in W_1$ of degree $d_1:=d_{G[V_1]}(v)$ in $G[V_1]$. By symmetry of $V_2$ and $V_3$, we may assume $|N(v_1)\cap V_2|\le d_1$. We claim that $(V_1\setminus \{v_1\}, V_2\cup \{v_1\},V_3)$ is a 3-partition satisfying both (i) and (ii). Indeed, $e(V_1\setminus \{v_1\}, V_2\cup \{v_1\},V_3)=e(V_1,V_2,V_3)-|N(v_1)\cap V_2|+d_1\ge e(V_1,V_2,V_3)\geq 2m/3+h(m)/3$. Note that $|N(v_1)\cap \overline{V_1}|\leq |V_2|+|V_3|=|N(v_0)\cap \overline{V_1}|$, we have $\max\{e(V_2\cup \{v_1\}),e(V_3)\}\leq \max_{2\le i \le 3}\{e({{W}_{i}})\}\le m'/{4}+h({m}')/8\leq{m}/{9}+{h(m)}/{9}$ by (\ref{t3e5}) Lemma \ref{l4} (iv). Moreover, $e(V_1\setminus \{v_1\})=e(V_1)-d_1={m}/{9}+\beta_1-d_1<{m}/{9}+h(m)/9+1/2-1<{m}/{9}+h(m)/9$. Hence, $(V_1\setminus \{v_1\}, V_2\cup \{v_1\},V_3)$ is a desired $3$-partition. This completes the proof.
\end{proof}

\section*{Acknowledgments}
We thank Jie Ma for helpful discussion.

%\bibliography{references} 
{\small 

}

\begin{appendices}
\section{Proof of Claims \ref{c2}, \ref{c7} and \ref{c41}} \label{appendix A}

The proofs of Claims \ref{c2}, \ref{c7} and \ref{c41} are similar to that of Claim \ref{c1}.
\begin{proof}[Proof of Claim \ref{c2}]
\begin{enumerate} 
    \item [(i)] 
It is easy to show $e(X_i, Z_j) \geq 6$ by Lemma \ref{l16} (iii).

    \item [(ii)]
By Lemma \ref{l16} (i), $e(X_i,X_j)\ge 3$ . Now we assume $e(X_{i'},X_{j'})=3$ for some $i' \ne j' \in [x]$. Let $v_1v_2,v_2v_3\in E(X_{i'})$ and $v_4v_5,v_5v_6\in E(X_{j'})$. Then $(V_1,\ldots,V_k)$ is $(v_2,X_{j'},v_5,X_{i'})$-judicious.
Hence, $e(X_i,X_j)\ge4$.

    \item [(iii)]
By Lemma \ref{l16} (i), we have $e(Y_i,Y_j)\ge 2$. Suppose there exists some $i' \ne j' \in[y]$ such that $e(Y_{i'},Y_{j'})\leq 3$. Let $u_1u_2\in E(Y_{i'})$ and $u_3u_4\in E(Y_{j'})$.
It is easy to show that $E(\{u_1,u_2\},\{u_3,u_4\})\\=E(Y_{i'},Y_{j'})$. Without loss of generality, let $u_1u_3\in E(G)$. First suppose $d_{Y_{j'}}(u_1)=1$. Then $(V_1,\ldots,V_k)$ is $(u_2,Y_{j'},u_4,Y_{i'})$-judicious. Thus $d_{Y_{j'}}(u_1)=2$ and so $d_{Y_{j'}}(u_2)=1$. 
Thus $u_1u_4\in E(G)$. Then $(V_1,\ldots,V_k)$ is $(u_2,Y_{j'},u_4,Y_{i'})$-judicious if $u_2u_4\in E(G)$ and $(u_2,Y_{j'},u_3,Y_{i'})$-judicious otherwise. Thus, $e(Y_i,Y_j)\ge 4$.

    \item [(iv)] 
Let $p_1\in V(X_1)$ with $d_{X_1}(p_1)>0$. Suppose $e(Z_{i'},Z_{j'})\le 2$ for some $i'\ne j' \in [z]$.
By Lemma \ref{l16} (iii), $d_{Z_{i'}}(p_1)\ge 2$. If $e(N_{Z_{i'}}(p_1),Z_{j'})=0$, then $(V_1,\ldots,V_k)$ is $(p_1,Z_{i'},N_{Z_{i'}}(p_1),Z_{j'})$-judicious. Thus, $e(N_{Z_{i'}}(p_1),Z_{j'})\ge 1$. Let $p_2\in N_{Z_{i'}}(p_1)$ and let $p_2p_4\in E(G)$ where $p_4 \in V(Z_{j'})$. Then $(V_1,\ldots,V_k)$ is $(p_1,Z_{i'},N_{Z_{i'}}(p_1)-p_2,Z_{j'})$-judicious. Thus, $e(Z_i,Z_j)\ge 3$.

    \item [(v)] 
By Lemma \ref{l16} (i), $e(Y_i,Z_j)\ge 2$.  Suppose for contradiction that \( e(Y_{i'}, Z_{j'}) = 2 \) for some \( i' \in [y] \), \( j' \in [z] \). Let \( w_2w_3\in E(Y_{i'}) \). If \( N_{Z_{j'}}(w_2) \cap N_{Z_{j'}}(w_3) = \emptyset \), then $(V_1,\ldots,V_k)$ is $(w_2,Z_{j'},N_{Z_{j'}}(w_2),Y_{i'})$-judicious. Hence, there exists \( w_4 \in V(Z_{j'}) \) such that $w_2w_4,w_3w_4\in E(G)$. Let \( w_1 \in V(X_1)\) with $d_{X_1}(w_1)>0$. Then $(V_1,\ldots,V_k)$ is $(w_1,Z_{j'},N_{Z_{j'}}(w_1)\setminus {w_4},Y_{i'})$-judicious. Thus, $e(Y_i,Z_j)\ge 3$. \qedhere
\end{enumerate}
\end{proof}

\begin{proof}[Proof of Claim \ref{c7}]
\begin{enumerate} 
    \item [(i)]
If $\max\{|\{v\in W_i:d_{W_i}(v)\geq 1\}|,|\{v\in W_j:d_{W_j}(v)\geq 1\}|\}\ge 4$ for every $i\ne j \in [w]$, then we are done by Lemma \ref{l16} (i). Suppose there exist some $i' \ne j' \in [w]$ such that $|\{v\in W_{i'}:d_{W_{i'}}(v)>0\}|=|\{v\in W_{j'}:d_{W_{j'}}(v)>0\}|=3$. Let $v_1,v_2,v_3\in V(W_{i'})$ and $v_4,v_5,v_6\in V(W_{j'})$ have nonzero degree in $W_{i'}$ and $W_{j'}$, respectively.
If $e(W_{i'},W_{j'})=3$, then $E(W_{i'},W_{j'})=e(\{v_1,v_2,v_3\},\{v_4,v_5,v_6\})$ and they form a matching. Assume $v_1v_4 \in E(G)$.
In this case, $(V_1,\ldots,V_k)$ is $(v_1,W_{j'},v_4,W_{i'})$-decreasing. Thus, $e(W_i,W_j)\ge 4$. 

    \item [(ii)]
$\min\{e(W_i,Y_k),e(W_i,Z_l)\}\ge 6$ is a straightforward conclusion of Lemma \ref{l16} (iii). Now we prove $e(W_i,X_j)\geq 6$. Let $W_i^*$ be the subgraph of $W_i$ induced on all nonzero-degree vertices of $V(W_i)$.
If $e(W_i)>e(X_j)+1$, then we are done by Lemma \ref{l16} (iii) since $|W_i^*|\geq 3$. Now consider $e(W_i)=e(X_j)+1=3$. By Lemma \ref{l16} (i) and (ii), we need to show $2|\{v\in W_i^*:d_{W_i^*}(v)\geq 2\}|+|\{v\in W_i^*:d_{W_i^*}(v)= 1\}|\geq 6$.
If $|W_i^*|=3$, then $W_i^*=K_3$ and we are done. If $|W_i^*|=5$, then $W_i^*$ is $P_2\sqcup P_3$. If $|W_i^*|=6$, then $W_i$ consists of three independent edges. Now we consider $|W_i^*|=4$. If $W_i^*=P_4$ then are done. Otherwise, $W_i^*=K_{1,3}$. Let $x$ be the center vertex of $K_{1,3}$. Note that $e(W_i,X_j)\geq 5$. Assume $e(W_i,X_j)=5$. Let $xa\in E(G)$ and $xb\in E(G)$, where $a,b \in V(X_j)$. Then $(V_1,\ldots,V_k)$ is $(x,X_j,\{a,b\},W_i)$-decreasing.

\item[(iii)] 
By Claim \ref{c2}, $e(Z_i,Z_j)\ge 3$. Now suppose there exist some $i'\ne j' \in[z]$ such that $e(Z_{i'},Z_{j'})=3$. Let $p_1 \in V(W_1)$ with $d_{W_1}(p_1)>0$. Let $p_2\in N_{Z_{i'}}(p_1)$ and $p_2p_4\in E(G)$, where $p_4\in V(Z_{j'})$. Then $(V_1,\ldots,V_k)$ is $(p_1,Z_{j'},N_{Z_{i'}}(p_1)\setminus p_4,Z_{j'})$-decreasing. Thus, $e(Z_i,Z_j)\ge 4$. 

\item[(iv)] 
By Claim \ref{c2}, $e(Y_i,Z_j)\ge 3$. Now suppose there exist some $i'\in [y], j' \in[z]$ such that $e(Y_{i'},Z_{j'})=3$. Let $w_1 \in V(W_1)$ with $d_{W_1}(p_1)>0$. Let $w_2w_3\in E(Y_{i'})$. It is easy to show that there exists a vertex $w_4\in V(Z_{j'})$ such that $w_4 \in N_{Z_{j'}}(w_2)\cap N_{Z_{j'}}(w_3)$. 
Thus, $(V_1,\ldots,V_k)$ is $(w_1,Z_{j'},N_{Z_{j'}}(w_1)\setminus w_4,Y_{i'})$-decreasing. Hence, $e(Y_i,Z_j)\ge 4$. \qedhere
\end{enumerate}
\end{proof}

\begin{proof}[Proof of Claim \ref{c41}]
Since \( \max_{1 \leq i \leq k} e(V_i) = 2 \), each \( X_i \) is either a copy of \( 2K_2 \) or \( P_3 \) (modulo isolated vertices). By Claim \ref{c2}, we already have $e(X_i,X_j)\geq 4$. Suppose, for contradiction, that \( e(X_{i'}, X_{j'}) = t \) for some \( i' \neq j' \in [x] \), where \( t \in \{4,5\} \). We consider three cases based on the types of \( X_{i'} \) and \( X_{j'} \) (modulo isolated vertices).

\begin{enumerate}
    \item[\textup{(1)}] \textit{Both are \( 2K_2 \).} 
    Let \( u_1u_2, u_3u_4 \in E(X_{i'}) \) and \( u_5u_6, u_7u_8 \in E(X_{j'}) \). Without loss of generality, let $u_1u_5\in E(G)$. If $t=4$, then $G[u_1,u_2,u_3,u_4,u_5,u_6,u_7,u_8]$ is a copy of $C_8$ or $2C_4$. Thus one can find a partition with a smaller lexicographic order that contradicts our choice of $(V_1,\ldots,V_k)$. Now suppose $t=5$. Suppose there exists $u_9u_{10}\in E(G)$ such that $u_9\notin \{u_1,u_2,u_3,u_4\}$. If $u_{10}\notin \{u_5,u_6,u_7,u_8\}$, then $X_{i'}\cup X_{j'}$ (modulo isolated vertices) contains a copy of $C_8$ or $2C_4$, a contradiction again. Thus, $u_{10}\in \{u_5,u_6,u_7,u_8\}$. First suppose $u_{10}=u_5$. Then $(V_1,\ldots,V_k)$ is $(\{u_1,u_4,u_9\},X_{j'},\{u_5,u_8\},X_{i'})$-decreasing. Assume $u_{10}=u_6$. Then $(V_1,\ldots,V_k)$ is $(\{u_2,u_4\},X_{j'},\{u_5,u_8\},X_{i'})$-decreasing when $u_3u_7 \in E(G)$ and $(\{u_1,u_3\},X_{j'},\{u_5,u_7\},X_{i'})$-decreasing when $u_3u_7 \notin E(G)$. If $u_{10}=u_7$, then $(V_1,\ldots,V_k)$ is $(\{u_2,u_4\},X_{j'},\{u_6,u_8\},X_{i'})$-decreasing. If $u_{10}=u_8$, then $(V_1,\ldots,V_k)$ is $(\{u_2,u_4\},X_{j'},\{u_6,u_7\},X_{i'})$-decreasing. Thus, $u_9\in \{u_1,u_2,u_3,u_4\}$, which implies  \( E(X_{i'}, X_{j'}) = E(\{u_1, u_2, u_3, u_4\}, \{u_5, u_6, u_7, u_8\}) \). Without loss of generality, assume $d_{X_{j'}}(u_1)=2$ and $u_1u_{l_1},u_1u_{l_2}\in E(G)$, where $u_{l_1},u_{l_2}\in \{u_5,u_6,u_7,u_8\}$. Then $(V_1,\ldots,V_k)$ is $(\{u_1,u_3\},X_{j'}, \{u_{l_1},u_{l_2}\},X_{i'})$-decreasing.
   
    \item[\textup{(2)}] \textit{One is \( 2K_2 \) and the other is \( P_3 \).} Without loss of generality, assume \( X_{i'} = 2K_2 \) with edges \( u_1u_2, u_3u_4 \), and \( X_{j'} = P_3 \) with edges \( u_5u_6, u_6u_7\) (modulo isolated vertices). If \(t = 4\), then \((V_1, \ldots, V_k)\) is \((\{u_2, u_4\}, X_{j'}, u_6, X_{i'})\)-decreasing if $u_3u_6\notin E(G)$, and \((\{u_2, u_4\}, X_{j'}, \{u_5, u_7\}, X_{i'})\)-decreasing if $u_3u_6\in E(G)$. Suppose \(t = 5\). We can prove $E(X_{i'}, X_{j'}) = E(\{u_1, u_2, u_3, u_4\}, \{u_5,\\ u_6, u_7\})$ like (i). Without loss of generality, assume $d_{X_{j'}}(u_1)=2$. If $u_1u_6\notin E(G)$, then $(V_1,\ldots,V_k)$ is $(\{u_1,u_3\},X_{j'},  \{u_{5},u_{7}\},X_{i'})$-decreasing. Now assume $u_1u_6\in E(G)$. First suppose $d_{X_{j'}}(u_6)=3$. Then $(V_1,\ldots,V_k)$ is $(\{u_2,u_4\},X_{j'},u_6,X_{i'})$-decreasing when $u_3u_6\notin E(G)$, $(\{u_2,u_3\},X_{j'},u_6,X_{i'})$-decreasing when $u_4u_6\notin E(G)$ and $(\{u_1,u_4\},X_{j'},u_6,X_{i'})$-decreasing when $u_2u_6\notin E(G)$. If $d_{X_{j'}}(u_6)\leq 2$, then $(V_1,\ldots,V_k)$ is $(\{u_2,u_4\},X_{j'},\{u_5,u_7\}, X_{i'})$-decreasing.
    
   \item[\textup{(3)}] \textit{Both are \( P_3 \).} Let \( u_1u_2, u_2u_3 \in E(X_{i'}) \) and \( u_4u_5, u_5u_6 \in E(X_{j'}) \). Like (i), we can show $E(X_{i'}, X_{j'}) = E(\{u_1, u_2, u_3\}, \{u_4, u_5, u_6\})$. If $d_{X_{j'}}(u_1)=3$, then \( (V_1, \ldots, V_k) \) is \( (u_2, X_{j'}, u_5, X_{i'}) \)-decreasing. Now consider $d_{X_{j'}}(u_1)=2$. If $u_1u_5\notin E(G)$, then \( (V_1, \ldots, V_k) \) is \( (u_2, X_{j'}, u_5, X_{i'}) \)-decreasing. Otherwise, \( (V_1, \ldots, V_k) \) is \( (u_2, X_{j'}, \{u_4, u_6\}, X_{i'}) \)-decreasing if $d_{X_{j'}}(u_2)=2$, and \( (u_2, X_{j'}, u_5, X_{i'}) \)-decreasing if $d_{X_{j'}}(u_2)=1$. Now assume $d_{X_{j'}}(u_1)=1$. Then \( (V_1, \ldots, V_k) \) is \( (u_2, X_{j'}, u_5, X_{i'}) \)-decreasing if \( u_1u_5\notin E(G) \) or \( (\{u_1, u_3\}, X_{j'}, u_5, X_{i'}) \)-decreasing if \( u_1u_5\in E(G) \).
\end{enumerate}
Thus, $e(X_i,X_j)\ge 6$ for every $i\ne j \in [x]$.
\end{proof}

\section{Proof of Lemma \ref{l5}} \label{appendix B}
\begin{proof}
We begin with (i). A simple computation yields
\[{\psi}'(x)=\frac{1}{2}-\frac{1}{4}\cdot {{\left(2(\frac{8}{9}m-\beta_{1} +d_0-x)+\frac{1}{4}\right)}^{-1/2}} \]
and
\[{\psi}''(x)=-\frac{1}{4}\cdot {{\left(2(\frac{8}{9}m-\beta_{1} +d_0-x)+\frac{1}{4}\right)}^{-3/2}}<0,\]
which implies $\psi(x,c)$ is concave in $x$ for any fixed $c$.

For (ii), assume for contradiction that $\psi(8m/9-\beta_{1} +d,c)<0$. We obtain that
\begin{equation} \label{l5e1}
    \beta_{1} >\frac{2}{9}m+d-\frac{1}{3}\sqrt{2m+\frac{1}{4}}+\frac{1}{6}+c\geq \frac{2}{9}m+d-\frac{1}{3}\sqrt{2m+\frac{1}{4}}+\frac{1}{6}.
\end{equation}
Let $r_2(m)=-2m/3+(5\sqrt{2m+1/4})/3$. Note that $r_2(m)<0$ when $m\ge 18$.
By Lemma \ref{l4} (iii) and (\ref{l5e1}), we have
\[
   m'<-\frac{2}{3}m-3d+\frac{5}{3}\sqrt{2m+\frac{1}{4}}-\frac{5}{6}<0,
\]
a contradiction. It follows that $\psi(8m/9-\beta_{1} +d,c)\ge 0$.

Next, we prove (iii). Let $a:=4m/9+4\beta_{1}-d$. Substituting $a$ into $\psi(x,c)$, we have
\[
 \psi(a,c) =\frac{1}{4}\sqrt{\frac{8}{9}m-10\beta_{1} +4d+\frac{1}{4}}+\frac{3}{2}\beta_{1} -\frac{1}{3}\sqrt{2m+\frac{1}{4}}+\frac{1}{24}+c. 
\]
Note that $\beta_1$ must satisfy $8m/9-10\beta_{1} +4d+1/4 \ge 0$, from which we deduce
\[
\beta_{1} \le \gamma:=\frac{4}{45}m + \frac{2}{5}d + \frac{1}{40}.
\]
Let $\theta_{0}:=h(m)/9$ and $\theta_{1}:=h(m)/9+1/2$. Consider $\psi(a,c)$ as a function as $\beta_{1}$, let $g(\beta_{1}):=\psi(a,c)$. Its second derivative is \[
g''(\beta_{1})=-\frac{25}{4\left(\dfrac{8}{9}m - 10\beta_1 + 4d+ \dfrac{1}{4}\right)^{\frac{3}{2}}}<0.
\]
By the concavity of \(g(\beta_{1})\), it suffices to verify \(g(\theta_1) \geq 0\) and \(g(\gamma) \geq 0\) when \(\beta_1 \geq \theta_1\).
Substituting $\gamma$ into $g(\beta_{1})$, we have
\begin{align}
    g(\gamma) =&\frac{2}{15}m+\frac{3}{5}d-\frac{1}{3}\sqrt{2m+\frac{1}{4}}+\frac{3}{80}+\frac{1}{24}+c
    \geq  \frac{2}{15}\left(m-\frac{5}{2}\sqrt{2m+\frac{1}{4}}\right)+\frac{3}{5}+\frac{3}{80}+\frac{1}{24}\notag \\
    >& \frac{2}{15}\left(m-\frac{5}{2}\sqrt{2m+\frac{1}{4}}\right)
			 = -\frac{1}{5}r_2(m)
              > 0. \notag
     \end{align}
When $m\ge 18$, it is easy to show that 
\[
\sqrt{2m-\frac{5}{2}\sqrt{2m+\frac{1}{4}}-\frac{7}{16}}>\frac{2}{3}\sqrt{2m+\frac{1}{4}},
\quad 
\sqrt{2m-\frac{5}{2}\sqrt{2m+\frac{1}{4}}}-\sqrt{2m+\frac{1}{4}}\geq-\frac{3}{2}
\]
and
\[
\sqrt{2m - \frac{5}{2}\sqrt{2m+\frac{1}{4}} + \frac{5}{4} +9+ \frac{9}{16}} -\sqrt{2m+\frac{1}{4}}>-\frac{5}{4}.
\]
If $\beta_{1}\ge h(m)/9+1/2$, then
\begin{align}
g(\theta_{1})
    =&\frac{1}{4}\sqrt{\frac{8}{9}m-\frac{10}{9}h(m)-5 +4d+\frac{1}{4}}+\frac{3}{2}(\frac{1}{9}h(m)+\frac{1}{2}) -\frac{1}{3}\sqrt{2m+\frac{1}{4}}+\frac{1}{24}+c \notag \\ 
     \ge &\frac{1}{6}\left(\sqrt{2m-\frac{5}{2}\sqrt{2m+\frac{1}{4}}-\frac{7}{16}}-\sqrt{2m+\frac{1}{4}}\right) +\frac{17}{24} \notag \\
     =&\frac{1}{6}\left(\frac{-\frac{5}{2}\sqrt{2m+\frac{1}{4}}-\frac{7}{16}}{\sqrt{2m-\frac{5}{2}\sqrt{2m+\frac{1}{4}}-\frac{7}{16}}+\sqrt{2m+\frac{1}{4}}}\right) +\frac{17}{24}\notag \\
     >& -\frac{1}{6}\left(\frac{\frac{5}{2}\sqrt{2m+\frac{1}{4}}+\frac{7}{16}}{\frac{5}{3}\sqrt{2m+\frac{1}{4}}}\right) +\frac{17}{24}
     >-\frac{1}{6}(\frac{3}{2}+\frac{21}{80\sqrt{36+\frac{1}{4}}})+\frac{17}{24} 
     > 0.\notag
     \end{align}
     
For (iv), we have
\[
\psi(a+1,c)=\frac{1}{4}\sqrt{\frac{8}{9}m-10\beta_{1} +4d-\frac{7}{4}}+\frac{3}{2}\beta_{1} -\frac{1}{3}\sqrt{2m+\frac{1}{4}}+\frac{13}{24}+c.
\]
    
Note that $\beta_1$ must satisfy $8m/9-10\beta_{1} +4d-7/4 \ge 0$, from which we deduce that
\[
\beta_{1} \le \gamma_1:=\frac{4}{45}m + \frac{2}{5}d - \frac{7}{40}.
\]
Consider $\psi(a+1,c)$ as a function as $\beta_{1}$ and let $g_1(\beta_{1}):=\psi(a+1,c)$. Similarly, one can show $g_1(\beta_{1})$ is concave in $\beta_1$. Since $d\geq 1$, we have
\begin{align}
    g_1(\gamma_1) =&\frac{2}{15}m+\frac{3}{5}d-\frac{1}{3}\sqrt{2m+\frac{1}{4}}-\frac{21}{80}+\frac{13}{24}+c
    \geq \frac{2}{15}\left(m-\frac{5}{2}\sqrt{2m+\frac{1}{4}}\right)+\frac{3}{5}-\frac{21}{80}+\frac{13}{24}\notag \\
    >& \frac{2}{15}\left(m-\frac{5}{2}\sqrt{2m+\frac{1}{4}}\right)
			 = -\frac{1}{5}r_2(m)
              > 0 \notag
     \end{align}
     and
\begin{align}
   g_1(\theta_0) =& \frac{1}{4}\sqrt{\frac{8}{9}m-\frac{10}{9}h(m) +4d-\frac{7}{4}}+\frac{1}{6}h(m) -\frac{1}{3}\sqrt{2m+\frac{1}{4}}+\frac{13}{24}+c \notag \\
   >& \frac{1}{6}\left(\sqrt{2m-\frac{5}{2}\sqrt{2m+\frac{1}{4}}}-\sqrt{2m+\frac{1}{4}}\right)+\frac{11}{24} 
   \geq \frac{1}{6}\cdot(-\frac{3}{2})+\frac{11}{24} 
   >0. \notag
\end{align}
Thus, $\psi(a+1,c)\geq0$.

Finally we prove (v). Since $c\geq 1/4$, we have
\begin{align}
  g\left(\theta_0\right)
\geq &\frac{1}{4}\sqrt{\frac{8}{9}m - \frac{10}{9}h(m) + 4d + \frac{1}{4}}+\frac{1}{6}h(m) -\frac{1}{3}\sqrt{2m+\frac{1}{4}}+\frac{1}{24}+\frac{1}{4} \notag \\
\geq &\frac{1}{6}\sqrt{2m - \frac{5}{2}\sqrt{2m+\frac{1}{4}} + \frac{5}{4} +9+ \frac{9}{16}} -\frac{1}{6}\sqrt{2m+\frac{1}{4}}-\frac{1}{12}+\frac{1}{24}+\frac{1}{4} \notag \\
\geq &-\frac{1}{6}\cdot \frac{5}{4}+\frac{5}{24} =0.\notag
\end{align}    
This completes our proof.
\end{proof}
\end{appendices}

\end{document}